\theoremstyle{plain}
\newtheorem{prop}{Proposition}[section]
\newtheorem{theorem}[prop]{Theorem}
\theoremstyle{definition}
\newtheorem{definition}[prop]{Definition}
\theoremstyle{remark}
\newtheorem{remark}[prop]{Remark}
\newtheorem{example}[prop]{Example}
\title{\textbf{Multi-moment maps on nearly K\"ahler six-manifolds}}
\author{Giovanni Russo}
\date{}
\newcommand{\id}{\mathrm{Id}}
\newcommand{\diag}{\mathrm{diag}}
\newcommand{\Span}{\mathrm{Span}}
\DeclareMathOperator{\chair}{\lrcorner \thinspace}
\DeclareMathOperator{\re}{\mathrm{Re}}
\DeclareMathOperator{\im}{\mathrm{Im}}
\DeclareMathOperator{\tr}{\mathrm{Tr}}
\DeclareMathOperator{\Lie}{\mathcal{L}}
\newcommand\conjugate[1]{\overline{#1}\vphantom{#1}}
\numberwithin{equation}{section}
\begin{document}
\maketitle
\begin{abstract}
We study multi-moment maps on nearly K\"ahler six-manifolds with a two-torus symmetry.
Critical points of these maps have non-trivial stabilisers.
The configuration of fixed-points and one-dimensional orbits is worked out for generic six-manifolds equipped with an $\mathrm{SU}(3)$-structure admitting a two-torus symmetry. 
Projecting the subspaces obtained to the orbit space yields a trivalent graph. We illustrate this result concretely on the 
homogeneous nearly K\"ahler examples.
\end{abstract}

\tableofcontents

\section{Introduction}
\label{introduction}

Nearly K\"ahler manifolds belong to the wider framework of
almost Hermitian geometry. A formal definition was given by Gray in the 1970's, and is the following.
\begin{definition}{\cite{Gray1976}}
\label{definition1.1}
Let $(M,g,J)$ be an almost Hermitian manifold with Riemannian metric $g$ and almost complex 
structure $J$ compatible with $g$. Let $\nabla$ be the Levi-Civita connection. Then $M$ is called \emph{nearly K\"ahler}
if $(\nabla_X J)X = 0$ for every vector field $X$ on $M$.
\end{definition}
Lowering the upper index of $J$ yields a two-form $\sigma \coloneqq g(J{}\cdot{},{}\cdot{})$ on $M$, the \emph{fundamental}
two-form. Hereafter we assume $M$ has dimension six. 
In this case the skew-symmetry of $\nabla J$ is equivalent to the existence of a complex $(3,0)$-form $\psi_{\mathbb{C}} \coloneqq \psi_++i\psi_-$ on $M$ such that, up to homothety
\begin{equation}
\label{nk_structure_equations}
d\sigma = 3\psi_+, \qquad d\psi_- = -2\sigma \wedge \sigma.
\end{equation}
The two identities are the nearly K\"ahler  \emph{structure
  equations}. The above result was first illustrated in the 
special case of the flag manifold of $\mathbb{C}^3$ by Bryant \cite{Bryant}, and later on explained in more generality by Carri\'on \cite{Reyes1993}. 
Another reference is \cite{russo_thesis}.

There are only four compact, homogeneous nearly K\"ahler six-dimensional manifolds \cite{Butruille}: the six-sphere 
$\mathbb{S}^6 = \mathrm{G}_2/\mathrm{SU}(3)$, the flag manifold $F_{1,2}(\mathbb{C}^3) = \mathrm{SU}(3)/T^2$, 
the complex projective space $\mathbb{CP}^3 = \mathrm{Sp}(2)/\mathrm{Sp}(1)\mathrm{U}(1)$, 
and the product of three-spheres $\mathbb{S}^3 \times \mathbb{S}^3 = \mathrm{SU}(2)^3/\mathrm{SU}(2)_{\Delta}$. 
Recently, Foscolo and Haskins \cite{Foscolo} proved the existence of one cohomogeneity-one nearly K\"ahler structure on
$\mathbb{S}^6$ and one on $\mathbb{S}^3 \times \mathbb{S}^3$. In this case the Lie group acting is $\mathrm{SU}(2) \times
\mathrm{SU}(2)$. 

All the spaces above have symmetry rank at least two, 
so it is a sensible question to ask whether there is a theory of nearly K\"ahler six-manifolds with a two-torus symmetry.
A contribution in this direction was given in \cite{russo_swann} making use of \emph{multi-moment maps}. 
 Assume a two-torus $T^2$ acts on a nearly K\"ahler six-manifold $(M,\sigma,\psi_{\pm})$
preserving the $\mathrm{SU}(3)$-structure. The action induces vector fields $U$ and $V$ on $M$, 
thus we have a smooth, $T^2$-invariant, real-valued global function given by
\begin{equation}
\label{multi_moment_map_general}
\nu_M \coloneqq \sigma(U,V),
\end{equation}
and its differential can be computed by Cartan's formula obtaining
\begin{align}
\label{differential}
d\nu_M = 3\psi_+(U,V,{}\cdot{}).
\end{align}
Identity \eqref{differential} and invariance imply $\nu_M$ is a multi-moment map for the torus action \cite{MadsenSwann}. 
In \cite{russo_swann} we showed that level sets of $\nu_M$ corresponding to regular values are principal $T^2$-bundles
over three-dimensional smooth quotients. However, critical sets do not fit into this description, so we need to understand
their structure.

The objective of this article is to study critical sets of multi-moment maps on nearly K\"ahler six-manifolds
with a two-torus symmetry. This integrates the description of general properties of multi-moment maps initiated in \cite{russo_swann} 
and provides concrete examples to work with. The $T^2$-invariance of \eqref{differential} implies that if a point is critical then all points in its orbit are critical as well. 
We then talk about \emph{critical orbits} of $\nu_M$.  
We explain the correspondence between critical orbits and points with non-trivial stabiliser, proving a general
result on the configurations of the latter. Part of this information may be encoded in \textit{trivalent graphs} lying in $\nu_M^{-1}(0)/T^2$, as illustrated in Theorem~\ref{trivalentgraphformalization}. 
Such graphs represent the topological structure of the
critical subspaces of $M$ where the multi-moment map vanishes. A substantial part of our work will be to construct these graphs for all homogeneous nearly K\"ahler six-manifolds.
We then give explicit descriptions of the quotients $\nu_M^{-1}(0)/T^2$
for $M$ homogeneous. The corresponding graphs when $M$ is $\mathbb S^6, F_{1,2}(\mathbb C^3)$, and $\mathbb{CP}^3$,
will represent the set of points in $\nu_M^{-1}(0)/T^2$ where the real part of the torus $T^2$ acts non-freely.
It turns out that when $M$ is homogeneous and $M$ is not $\mathbb{S}^3 \times \mathbb{S}^3$ critical orbits
correspond to extrema of $\nu_M$ or lie in the fibre $\nu_M^{-1}(0)$. The case $\mathbb{S}^3\times \mathbb{S}^3$ admits a $T^3$-symmetry,
so one has infinitely many choices for the two-torus acting, and each yields different pictures.

The paper is organised as follows. Since the discussion of the special cases is fairly technical we give the abstract results first. 
One may then study the homogeneous cases bearing in mind the general framework described in Section \ref{general_results}.
In all the remaining sections we quickly recall the homogeneous structure of each space, introduce two-torus actions,
and then construct multi-moment maps. Critical points are found directly by imposing the condition $\psi_+(U,V,{}\cdot{}) = 0$, 
according to~\eqref{differential}. \\

\textbf{Acknowledgements}. This work is supported by the Danish Council for Independent Research | Natural Sciences 
Project DFF - 6108\kern .05em-00358, and by the Danish National Research Foundation grant DNRF95 (Centre for Quantum Geometry of Moduli Spaces). 
The material contained in this paper is part of my PhD thesis \cite{russo_thesis}. I am grateful to Andrew Swann for his help and constant guidance. 
Further, I wish to thank Andrei Moroianu for useful discussions on the topic. Finally, I thank the reviewer for suggesting significant improvements of the exposition.

\section{General results}
\label{general_results}

As already recalled, a nearly K\"ahler six-manifold is an almost Hermitian manifold $(M,g,J)$ with an $\mathrm{SU}(3)$-structure $(\sigma,\psi_{\mathbb{C}}=\psi_++i\psi_-)$ satisfying
the partial differential equations \eqref{nk_structure_equations}. Each tangent space $T_pM$ is then an $\mathrm{SU}(3)$-module isomorphic to $\mathbb{C}^3$ with its standard 
$\mathrm{SU}(3)$-structure, and $\sigma,\psi_{\mathbb{C}}$ at $p$ are invariant forms of this representation.
There is an orthonormal basis $\{e^i,Je^i\}_{i=1,2,3}$ of $T_p^*M$ such that 
\begin{align}
\label{sigma_model} \sigma & = e^1\wedge Je^1 + e^2\wedge Je^2 + e^3\wedge Je^3, \\
\label{psi_+_model} \psi_+ & = e^1 \wedge e^2 \wedge e^3 - Je^1 \wedge Je^2 \wedge e^3 - e^1\wedge Je^2 \wedge Je^3 - Je^1 \wedge e^2 \wedge Je^3, \\
\label{psi_-_model} \psi_- & = e^1 \wedge e^2 \wedge Je^3 - Je^1 \wedge Je^2 \wedge Je^3 + e^1\wedge Je^2 \wedge e^3 + Je^1 \wedge e^2 \wedge e^3.
\end{align}
The shorthand $e^{ijk}$ for $e^i \wedge e^j \wedge e^k$ will sometimes be used, similarly for differential forms of other degrees. A two-torus $T^2$ acting 
effectively on $M$ and preserving the structure $(\sigma,\psi_{\pm})$ induces a pair of commuting vector fields $U,V$ on $M$, and the function $\nu_M = \sigma(U,V)$ 
is a multi-moment map with differential $d\nu_M = 3\psi_+(U,V,{}\cdot{})$.

In what follows we denote $g(U,U),g(U,V),g(V,V)$ by $g_{UU},g_{UV},g_{VV}$ and define $h^2 \coloneqq g_{UU}g_{VV}- g_{UV}^2$.
We use the same notation pointwise. Consider the basis $\{E_i,JE_i\}_{i=1,2,3},$ of $T_pM$ whose dual basis is $\{e^i,Je^i\}$. Up to acting with a special unitary transformation we have 
\begin{equation}
\label{infinitesimal_generators_pointwise}
U_p = g_{UU}^{1/2}E_1, \qquad g_{UU}^{1/2}V_p = g_{UV}E_1+\nu_MJE_1+\bigl(h^2-\nu_M^2\bigr)^{1/2}E_2.
\end{equation}
Thus \eqref{psi_+_model} and \eqref{infinitesimal_generators_pointwise} imply $d\nu_M = 3\psi_+(U,V,{}\cdot{}) = 3\bigl(h^2-\nu_M^2\bigr)^{1/2}e^3$ pointwise. 
\begin{prop}
\label{characterisation_critical_points} 
Let $(M,\sigma,\psi_{\pm})$ be a nearly K\"ahler six-manifold with a two-torus symmetry. Let $U,V$ be the infinitesimal generators of the action and $p$ be a point in $M$. Then
\begin{enumerate}
\item The vectors $U_p$ and $V_p$ are linearly dependent over $\mathbb{R}$ if and only if $p$ is critical and the multi-moment map $\nu_M$ vanishes at $p$. 
\item The vectors $U_p$ and $V_p$ are linearly dependent over $\mathbb{C}$ if and only if $p$ is critical. This is equivalent to saying $\nu_M^2 = h^2$ at~$p$.
\end{enumerate}
\end{prop}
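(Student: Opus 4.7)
Almost everything is already set up by the pointwise identity $d\nu_M=3(h^2-\nu_M^2)^{1/2}e^3$ displayed just before the statement. Since $e^3$ belongs to an orthonormal coframe, $d\nu_M$ vanishes at $p$ precisely when $h^2(p)=\nu_M(p)^2$, which is the algebraic equivalence asserted in part~(2). The remaining work is to match the two algebraic conditions $h^2=\nu_M^2$ and $h^2=0=\nu_M$ with the two linear-dependence conditions on $U_p$ and $V_p$.

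I would read both conclusions straight from the normal form \eqref{infinitesimal_generators_pointwise}, first assuming $U_p\neq 0$. Then $U_p$ spans $\mathbb{R}E_1$, so $\mathbb{C}U_p=\Span_{\mathbb{R}}(E_1,JE_1)$. Inspecting the expression for $V_p$ in \eqref{infinitesimal_generators_pointwise}, its coefficient along $JE_1$ is a positive multiple of $\nu_M$, while its coefficient along the transverse direction $E_2$ is a positive multiple of $(h^2-\nu_M^2)^{1/2}$. Hence $V_p\in\mathbb{C}U_p$ iff $h^2=\nu_M^2$, settling part~(2); and $V_p\in\mathbb{R}U_p$ iff one also has $\nu_M=0$, settling part~(1).

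The case $U_p=0$ must be handled separately because \eqref{infinitesimal_generators_pointwise} was obtained up to an $\mathrm{SU}(3)$-transformation normalising $U_p$, but it is immediate: $U_p,V_p$ are then trivially dependent over both $\mathbb{R}$ and $\mathbb{C}$, and one checks directly that $\nu_M(p)=0=h^2(p)$ and $d\nu_M(p)=3\psi_+(0,V_p,{}\cdot{})=0$. I do not foresee a conceptual obstacle: the geometric content, namely that $(h^2-\nu_M^2)^{1/2}$ is the norm of the component of $V_p$ orthogonal to the complex line $\mathbb{C}U_p$, is already encoded in the normal form, so the proof reduces to bookkeeping plus the $U_p=0$ case check.
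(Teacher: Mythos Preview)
Your proposal is correct and follows essentially the same approach as the paper: both arguments read the linear-dependence conditions directly off the normal form \eqref{infinitesimal_generators_pointwise}, using that the $E_2$-coefficient of $V_p$ controls criticality and the $JE_1$-coefficient controls $\nu_M$. You are slightly more explicit than the paper in isolating the degenerate case $U_p=0$, which the normal form (with its $g_{UU}^{1/2}$ denominator) does not cover, but otherwise the reasoning is the same.
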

\begin{proof}
Since $d\nu_M = 3\psi_+(U,V,{}\cdot{})$, the expressions in \eqref{infinitesimal_generators_pointwise} imply that a point $p$ is critical if and only $V_p$ is a linear combination of $U_p$ and $JU_p$. Therefore $g_{UU}V_p = g_{UV}U_p+\nu_M JU_p$. If $\nu_M$ vanishes at $p$ then $U_p$ and $V_p$ are linearly dependent over $\mathbb{R}$. The converse implication is obvious.

The second equivalence follows from the expression of $V_p$ in \eqref{infinitesimal_generators_pointwise}.
\end{proof}

\begin{remark}
Note that the value $0$ lies in the interior of $\nu_M(M)$ by \cite[Proposition 3.2]{russo_swann}.
\end{remark}

Observe that if $U_p$ and $V_p$ are linearly dependent over the reals the stabiliser $H_p$ of $p$ cannot be trivial. Let us describe all possible stabilisers $H_p$. In the following result $M$ need not be nearly K\"ahler.

\begin{theorem}
\label{trivalentgraphformalization}
Let $(M,\sigma,\psi_{\pm})$ be a six-dimensional manifold with an $\mathrm{SU}(3)$-structure admitting a two-torus symmetry. Assume the $T^2$-action is effective on $M$. Let $p$ be a point in $M$ and $H_p$ its stabiliser in $T^2$. 
\begin{enumerate}
\item If $\dim H_p = 2$ then $H_p = T^2$ and there is a neighbourhood $W$ of $p$ in $M$ with the following properties: the stabiliser of each point of $W$ is either trivial or a circle $\mathbb{S}^1 < T^2$, and the set of points in $W$ with one-dimensional stabilisers is a disjoint union of three totally geodesic two-dimensional submanifolds which are complex with respect to $J$ and whose closures only meet at \nolinebreak $p$.
\item If $\dim H_p =1$ then $H_p = \mathbb{S}^1 < T^2$ and there is a neighbourhood $W$ of the orbit through $p$ in $M$ with the following properties: the stabiliser of each point of $W$ is either trivial or $H_p$ and the set of points $\{q \in W : \textrm{Stab}_{T^2}(q) = H_p\}$ is a smooth totally geodesic submanifold of dimension two which is complex with respect to $J$.
\item If $\dim H_p = 0$ and $H_p$ is non-trivial, then $H_p \cong \mathbb{Z}_k$ for some $k > 1$. The $T^2$-orbit $E$ through $p$ is a totally geodesic two-dimensional submanifold, complex with respect to $J$, and there is a neighbourhood $W$ of this orbit where $T^2$ acts freely on $W \setminus E$.
\end{enumerate}
\end{theorem}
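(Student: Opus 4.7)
The unifying plan is to linearise the action at $p$. Because the $T^2$-action preserves $\sigma$ and $J$, it preserves the metric and the entire $\mathrm{SU}(3)$-structure, so the isotropy representation of $H_p$ on $T_pM$ takes values in $\mathrm{SU}(T_pM)\cong\mathrm{SU}(3)$. This representation is faithful: an isometry whose derivative at a fixed point is trivial is locally the identity, and $T^2$ acts effectively. By the equivariant slice theorem, each case reduces to an explicit linear analysis of $H_p\subset\mathrm{SU}(3)$ acting on $\mathbb{C}^3$. Totally geodesicity will then be automatic for fixed sets of isometries, and $J$-invariance will follow from commutativity of $dh$ with $J$ on such fixed sets.

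For (i), closedness of $H_p$ inside the connected $2$-dimensional group $T^2$ forces $H_p=T^2$, which embeds as a maximal torus of $\mathrm{SU}(3)$; after a unitary change of frame, the action on $\mathbb{C}^3$ reads $(z_1,z_2,z_3)\mapsto\bigl(e^{i\theta_1}z_1,e^{i\theta_2}z_2,e^{-i(\theta_1+\theta_2)}z_3\bigr)$. Inspection of the three pairs of weights shows that the non-trivial stabilisers occur exactly along the three coordinate complex lines, which pairwise meet only at the origin and carry three distinct circle subgroups as stabilisers. Transporting back by the equivariant exponential yields the three totally geodesic complex surfaces meeting only at $p$.

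For (ii), the orbit through $p$ has dimension one and is pointwise fixed by $H_p$ (by commutativity of $T^2$), so the orbit tangent lies in the zero-weight space of $H_p\cong S^1$ on $T_pM$. Writing the weights as $(a,b,-a-b)$, faithfulness forces exactly one weight to vanish, so the fixed subspace is a single complex line. Exponentiating gives a $2$-dimensional totally geodesic complex surface through $p$; restricting to a $T^2$-invariant tube around the orbit ensures, via the slice theorem, that all nearby stabilisers are contained in $H_p$, making the $H_p$-fixed locus in this tube precisely the set $\{q\in W:\mathrm{Stab}_{T^2}(q)=H_p\}$.

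For (iii), the main subtlety lies in showing that the orbit $E$ is complex and locally coincides with the $H_p$-fixed set. The orbit tangent $T_pE$ is $2$-real-dimensional and fixed by $H_p$. If $T_pE$ were not $J$-invariant, then $T_pE\oplus JT_pE$ would be a $4$-real-dimensional $H_p$-fixed complex subspace, and the residual action on its $1$-complex-dimensional orthogonal complement would lie in $\mathrm{U}(1)$; the $\mathrm{SU}(3)$ determinant constraint would force it to be trivial, contradicting faithfulness. Hence $E$ is complex, weight analysis identifies $T_pE$ with the full $H_p$-fixed subspace of $T_pM$, and $E$ is locally the fixed set of $H_p$ and thus totally geodesic. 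The induced faithful action of $H_p$ on the normal bundle $N_pE\cong\mathbb{C}^2$ lies in $\mathrm{SU}(2)$, and finite abelian subgroups of $\mathrm{SU}(2)$ are cyclic, giving $H_p\cong\mathbb{Z}_k$. A final check on the weights $(a,-a)$ with $\gcd(a,k)=1$ shows that every nonzero normal vector has trivial $H_p$-stabiliser, so $T^2$ acts freely on $W\setminus E$.
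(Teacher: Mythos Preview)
Your proof is correct and follows the same strategy as the paper: linearise via the isotropy representation $H_p\hookrightarrow\mathrm{SU}(T_pM)$, invoke the equivariant slice theorem, and read off the local structure from the weight decomposition. Your treatment of case~(iii) is a contrapositive repackaging of the paper's dichotomy $V_p\in\mathrm{Span}\{U_p,JU_p\}$ versus $V_p\notin\mathrm{Span}\{U_p,JU_p\}$, and is equally valid.

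There is one small gap. In case~(ii) you write ``the zero-weight space of $H_p\cong S^1$'' as though connectedness of $H_p$ were given, but the theorem asserts $H_p=\mathbb{S}^1$ as part of the conclusion: a closed one-dimensional subgroup of $T^2$ could in principle have several components. The paper fills this in by observing that any $g\in H_p$ commutes with the identity component $H_p^0$, hence its image in $\mathrm{SU}(2)$ (acting on the orthogonal $\mathbb{C}^2$) lies in the centraliser of the maximal torus, which is the torus itself; combined with the fact that $g$ fixes the real orbit direction $V_p$, this forces $g$ to act as $\mathrm{diag}(1,e^{i\beta},e^{-i\beta})$, and faithfulness then places $g$ in $H_p^0$. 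You should insert this step before invoking integer weights.
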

\begin{proof}
Let $g \in T^2$ and denote by $\vartheta_g$ the diffeomorphism of $M$ mapping $q$ to $gq$. Its differential $T_p\vartheta_g$ is an isomorphism between $T_pM$ and $T_{gp}M$. In particular, when $g \in H_p$, then $T_p\vartheta_g$ is an automorphism of $T_pM$, and $T_p\vartheta_g \in \mathrm{SU}(3)$ by assumption. Up to conjugation, $T_p\vartheta_g$ is an element of a maximal torus in $\mathrm{SU}(3)$, so for concreteness we take $T_p\vartheta_g = \diag(e^{i\vartheta},e^{i\varphi},e^{-i(\vartheta+\varphi)})$ with respect to the standard basis of $\mathbb{C}^3$.

When $\dim H_p=2$ then $H_p$ is exactly $T^2$ by the Closed Subgroup Theorem, and by the Equivariant Tubular Neighbourhood Theorem there is an open neighbourhood of $p$ equivariantly diffeomorphic to the twisted product
\begin{equation*}
T^2 \times_{T^2} (T_pM/T_p(T^2\cdot p)) \cong T_pM \cong \mathbb{C}^3.
\end{equation*} 
A point $q \neq p$ in this neighbourhood coincides with a vector $X$ in $\mathbb{C}^3$, and by equivariance the requirement $gq = q$ in $M$ translates to $T_q\vartheta_g X = X$ in $\mathbb{C}^3$. Denote $X$ by $(z^1,z^2,z^3) \in \mathbb{C}^3$. Then we look for points fixed by a non-trivial element of the torus by imposing the condition $\diag(e^{i\vartheta}, e^{i\zeta}, e^{-i(\vartheta+\zeta)})\cdot (z^1,z^2,z^3) = (z^1,z^2,z^3)$, where $e^{i\vartheta}, e^{i\zeta} \neq 1$.
One can solve the equation explicitly and find there are three $\mathbb{S}^1$-invariant directions $F_1,F_2,F_3$ corresponding to the standard basis of $\mathbb{C}^3$. Thus the lines $zF_1,zF_2,zF_3$ correspond to three two-dimensional invariant subspaces in $\mathbb{C}^3$ whose points have one-dimensional stabiliser. This proves that points $p$ with stabiliser $T^2$ are isolated when they exist, and there are three two-dimensional, disjoint submanifolds in a neighbourhood of the orbit through $p$ in $M$, intersecting at $p$, and whose points are fixed by one-dimensional stabilisers. The fact that they are totally geodesic follows e.g.\ from \cite[Chapter II, Theorem 5.1]{kobayashi}. 

Assume now $p$ has one-dimensional stabiliser $H_p$. Choosing $U$ in the Lie algebra of $H_p$ and $V$ such that $\Span \{U,V\} = \mathfrak{t}^2$, we have $U_p=0, V_p \neq 0$. So $T_pM \cong \Span\{V_p,JV_p\} \oplus \mathbb{R}^4 \cong \mathbb{C} \oplus \mathbb{C}^2$ orthogonally, and $\mathbb{C}^2$ gets an induced $\mathrm{SU}(2)$-structure. Since $V_p$ and $JV_p$ are $H_p$-invariant, $T_p\vartheta_g \in \mathrm{SU}(2)$ for $g \in H_p$. We claim $H_p \cong \mathbb{S}^1$: the connected component of the identity in $H_p$ is conjugate to $\mathbb{S}^1$, so up to a change of basis its elements are diagonal matrices of the form $\diag (e^{i\alpha},e^{-i\alpha})$. But $T_p\vartheta_g$ and $\diag (e^{i\alpha},e^{-i\alpha})$ for all $\alpha$ commute because $H_p$ is Abelian. Thus $T_p\vartheta_g$ must be diagonal, hence in $\mathbb{S}^1$, and the claim is proved.
Therefore, the orbit through $p$ has a neighbourhood diffeomorphic to 
$$T^2 \times_{\mathbb{S}^1} (T_pM/T_p(T^2\cdot p)) \cong \mathbb{S}^1 \times \mathbb{R}^5 \cong \mathbb{S}^1 \times (\mathbb{R} \oplus \mathbb{C}^2).$$
Call $\mathbb{S}^1_-$ the stabiliser $H_p$, so that $T^2 = \mathbb{S}^1_+ \times \mathbb{S}^1_-$. The torus-action on $\mathbb{S}^1 \times (\mathbb{R} \oplus \mathbb{C}^2)$ can be chosen as follows: $\mathbb{S}^1_+$ acts on $\mathbb{S}^1$, and $\mathbb{S}^1_-$ acts on $\mathbb{R} \oplus \mathbb{C}^2$ trivially on $\mathbb{R}$ and as the standard maximal torus in $\mathrm{SU}(2)$ on $\mathbb{C}^2$. But an element in $\mathbb{S}_-^1$ preserves $JV_p$, so a point $q$ in the neighbourhood $\mathbb{S}^1 \times (\mathbb{R} \oplus \mathbb{C}^2)$ is fixed by an element $\ell$ in the two-torus when the corresponding component in $\mathbb{R} \oplus \mathbb{C}^2$ is fixed, namely $T_q\vartheta_{\ell} X = X$ in $\mathbb{R} \oplus \mathbb{C}^2$. Since the action of $H_p$ on $\mathbb{R}$ is trivial, this condition translates to a condition on $\mathbb{C}^2 \subset \mathbb{C}^3 \cong T_pM$. But $\{0\} < \mathbb{C}^2$ is the only invariant subspace. Thus the set of points with non-trivial stabiliser is an invariant two-dimensional, totally geodesic submanifold containing $p$. 

Finally, when $p$ has zero-dimensional stabiliser $H_p$ there are two invariant independent directions $U_p,V_p \neq 0$. Two cases may occur: either $V_p \in \Span\{U_p,JU_p\}$ or $V_p \not \in \Span\{U_p,JU_p\}$. 

In the former case, $T_pM = \Span\{ U_p,JU_p\} \oplus \mathbb{C}^2$, so $H_p \leq \mathrm{SU}(2)$ is a discrete subgroup. But $H_p$ is compact and Abelian, so it is finite in $\mathrm{SU}(2)$ and is then conjugate to $\mathbb{Z}_k$ for some integer $k \geq 1$. Hence the orbit through $p$ has a neighbourhood diffeomorphic to 
\begin{align*}
T^2 \times_{\mathbb{Z}_k} \mathbb{C}^2 & = (T^2 \times_{\mathbb{Z}_k} \{0\}) \cup (T^2 \times_{\mathbb{Z}_k} (\mathbb{C}^2 \setminus \{0\})) \\
& = (T^2/\mathbb{Z}_k) \cup (T^2 \times_{\mathbb{Z}_k} (\mathbb{C}^2 \setminus \{0\})).
\end{align*}
Now, assume a point $q$ in this neighbourhood is fixed by $\mathbb{Z}_k$. Since the action of $\mathbb{Z}_k$ is trivial on $T^2/\mathbb{Z}_k$ and is free on $\mathbb{C}^2 \setminus \{0\}$, $q$ belongs to $T^2/\mathbb{Z}_k \cong T^2$, so it lies in the orbit of $p$. 

In the case $V_p \not \in \Span\{U_p,JU_p\}$ then $H_p$ fixes all of $T_pM$, so it is a subgroup of $\mathrm{SU}(1) = \{1\}$, and is then trivial.
\end{proof}

\begin{remark}
\label{equivalence}
Note that when $H_p$ has positive dimension the generators of the action are linearly dependent over the reals, whereas when $H_p$ is zero-dimensional and non-trivial they are linearly dependent over the complex numbers. 
\end{remark}
\begin{remark}
As remarked, the theorem applies to the general setting of $\mathrm{SU}(3)$-structures. In particular, part one and two apply to non-compact Calabi--Yau three-folds, which can be deduced from a recent result by Madsen and Swann \cite{madsen_swann}.
The idea has also been explored in the physics literature \cite{aganagic}.
\end{remark}
\begin{remark}
\label{graphs_orbifolds}
Consider the projection $\pi \colon M \to M/T^2$. Theorem \ref{trivalentgraphformalization} implies that by mapping fixed-points and two-submanifolds of points with one-dimensional stabiliser to \nolinebreak $M/T^2$ we obtain \emph{trivalent graphs}, namely graphs where three edges depart from each vertex. More precisely, the graphs lie on the three-manifolds $\nu_M^{-1}(0)/T^2$. In the first two cases $W/T^2$ is homeomorphic to $\mathbb{R}^4$. That $\mathbb{C}^3/T^2$ is homeomorphic to $\mathbb{R}^4$ follows from the homeomorphism between $\mathbb{S}^5/T^2$ and $\mathbb{S}^3$ \cite{hughes_swartz} and by taking the cones on the respective spaces. For the second case the homeomorphism is obtained by looking at $\mathbb{C}^2$ as a cone over $\mathbb{S}^3$ and at the sphere $\mathbb{S}^3$ as a principal $\mathbb{S}^1$-bundle over $\mathbb{S}^2$:
\begin{align*}
\mathbb{S}^1 \times (\mathbb{R} \oplus \mathbb{C}^2)/T^2 & \cong (\mathbb{R} \oplus \mathbb{C}^2)/\mathbb{S}_-^1 \cong \mathbb{R} \times (\mathbb{C}^2/\mathbb{S}^1) \\
& \cong\mathbb{R} \times (C(\mathbb{S}^3)/\mathbb{S}^1) \cong \mathbb{R} \times C(\mathbb{S}^3/\mathbb{S}^1) \\
& \cong \mathbb{R} \times C(\mathbb{S}^2) \cong \mathbb{R}^4.
\end{align*}
In the third case the image of the exceptional orbit is an orbifold point in $M/T^2$. The shape of the graphs for the examples constructed by Foscolo and Haskins \cite{Foscolo} are the same as for the homogeneous cases since the tori act in the same way, but the general critical sets may be different.
\end{remark}

\section{The six-sphere}
\label{the_six_sphere}

Let us recall a few basic concepts from $\mathrm{G}_2$ geometry to treat this case, detailed sources for what we need are e.g.\ \cite{Bryant} and \cite{fernandez_gray}. 
Let $V$ be a seven-dimensional vector space over the reals. Let $\{E_1,\dots,E_7\}$ be some basis and denote by $\{e^1,\dots,e^7\}$ its dual, so as to have an identification $V \cong \mathbb{R}^7$. 
Define the three-form
\begin{equation}
\label{three_form_g2}
\varphi \coloneqq e^{123}+e^{145}+e^{167}+e^{246}-e^{257}-e^{347}-e^{356}.
\end{equation}
The general linear group $\mathrm{GL}(7,\mathbb{R})$ acts by left-multiplication on $V$ and therefore induces canonically an action on $\Lambda^3 V^*$.
One may define the Lie group $\mathrm{G}_2$ as the stabiliser of $\varphi$ in $\mathrm{GL}(7,\mathbb{R})$:
\begin{equation*}
\mathrm{G}_2 \coloneqq \{g \in \mathrm{GL}(7,\mathbb{R}): g\varphi = \varphi\}.
\end{equation*}
The three-form $\varphi$ induces an inner product $\langle {}\cdot{},{}\cdot{}\rangle$ and an orientation on $\mathbb{R}^7$. The two objects in turn induce 
a Hodge star operator ${*}$ on $\mathbb{R}^7$, so we have the four-form
\begin{equation*}
{*}\varphi = e^{4567}+e^{2367}+e^{2345}+e^{1357}-e^{1346}-e^{1256}-e^{1247}.
\end{equation*}
Raising an indicex of $\varphi$ gives a $\mathrm{G}_2$-cross product $P\colon V \times V \to V$, defined by
\begin{equation}
\label{g2_cross_product}
\langle P(X,Y),Z\rangle \coloneqq \varphi(X,Y,Z),
\end{equation}
which in particular satisfies $\lVert P(X,Y) \rVert^2 = \lVert X\rVert^2 \lVert Y\rVert^2 - \langle X,Y \rangle^2$ for every pair $X,Y \in V$.

It is well known that $\mathrm{G}_2$ acts transitively on the six-sphere $\mathbb{S}^6 \subset \mathbb{R}^7 \cong V$ and that the isotropy 
group of $(1,0,\dots,0) \in \mathbb{S}^6$ is isomorphic to the special unitary group $\mathrm{SU}(3)$. This implies $\mathbb{S}^6$ is diffeomorphic to $\mathrm{G}_2/\mathrm{SU}(3)$.
Let $i\colon \mathbb{S}^6 \hookrightarrow \mathbb{R}^7$ be the standard immersion and denote by $g$ the pullback of $\langle {}\cdot{},{}\cdot{}\rangle$ by $i$. 
Call $N$ the unit normal to the six-sphere and define $J \colon \mathbb{R}^7 \to \mathbb{R}^7$ as 
$JX \coloneqq P(N,X)$. Let now $p$ be a point in $\mathbb{S}^6$. From \eqref{g2_cross_product} it follows that $J$ maps $T_p\mathbb{S}^6$ to itself. So one can view $J$ as an endomorphism of each tangent space of $\mathbb{S}^6$, 
and we do so without changing our notations. Another easy consequence of \eqref{g2_cross_product} is that $J$ is $g$-orthogonal if and only if $J^2 = -\id$. On the other hand, polarising the identity $\lVert P(X,Y) \rVert^2 = \lVert X\rVert^2 \lVert Y\rVert^2 - \langle X,Y \rangle^2$ yields the general formula $\langle P(X,Y),P(X,Z)\rangle = \lVert X\rVert^2 \langle Y,Z\rangle-\langle X,Y\rangle \langle X,Z \rangle$, so for $Y,Z$ tangent to the sphere 
\begin{equation*}
g(JY,JZ) = \lVert N \rVert^2 g(Y,Z) - g(N,Y)g(N,Z) = g(Y,Z).
\end{equation*}
Hence $J$ is $g$-orthogonal and $J^2 = -\id$ pointwise, i.e.\ $(\mathbb{S}^6,g,J)$ is an almost Hermitian
manifold.

\begin{prop}
The differential forms $\sigma \coloneqq g(J{}\cdot{},{}\cdot{}), \psi_+ \coloneqq i^*\varphi$, and $\psi_- \coloneqq -i^*(N \chair {*}\varphi)$ give a nearly K\"ahler structure on the six-sphere.
\end{prop}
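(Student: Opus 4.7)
The plan is to exploit the fact that $\mathbb{R}^7\setminus\{0\}$ with its Euclidean metric is the Riemannian cone over $(\mathbb{S}^6,g)$. Identifying $\mathbb{R}^7\setminus\{0\}$ with $\bigl((0,\infty)\times\mathbb{S}^6,\,dr^2+r^2g\bigr)$ via $(r,p)\mapsto rp$, at any point of the unit sphere the normal $N$ coincides with $\partial_r$, and from the cross-product relation \eqref{g2_cross_product} one has
\begin{equation*}
\sigma(X,Y)=g(JX,Y)=\langle P(N,X),Y\rangle=\varphi(N,X,Y),
\end{equation*}
so $\sigma=i^*(N\chair\varphi)$, while $\psi_+=i^*\varphi$ by definition.

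The key step is the cone decomposition: extending $\sigma,\psi_\pm$ to be independent of $r$,
\begin{equation*}
\varphi = r^2\,dr\wedge\sigma + r^3\psi_+, \qquad \Hodge\varphi = -r^3\,dr\wedge\psi_- + \tfrac{1}{2}r^4\,\sigma\wedge\sigma.
\end{equation*}
I would verify these by splitting each form at $rp$ according to whether it contains the radial direction or not, using that the identification sends $\partial_r$ to $N$ and scales a tangent vector $X$ to $\mathbb{S}^6$ by $r$. The identification of the radial components with $\sigma$ and $-\psi_-$ is immediate from the definitions; the $r^4$ piece of $\Hodge\varphi$ is $\mathrm{SU}(3)$-invariant of degree four on $\mathbb{R}^6$, hence proportional to $\sigma\wedge\sigma$, and its coefficient is pinned down by a single-component check using the model \eqref{sigma_model}--\eqref{psi_-_model}.

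That $(\sigma,\psi_\pm)$ defines an $\mathrm{SU}(3)$-structure at every point of $\mathbb{S}^6$ is automatic: $\mathrm{G}_2$ acts transitively with stabiliser $\mathrm{SU}(3)$, so at each $p$ the triple $(\sigma_p,\psi_{\pm\,p})$ is $\mathrm{G}_2$-equivalent to the standard model on $N^\perp\cong\mathbb{R}^6$; in particular the algebraic identity $\sigma\wedge\psi_+=0$ holds.

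The nearly K\"ahler structure equations are then derived from $d\varphi=0$ and $d\Hodge\varphi=0$ on $\mathbb{R}^7$, applied to the cone decomposition. Collecting the terms of $d\varphi=0$ proportional to $dr$ yields $d\sigma=3\psi_+$, with $d\psi_+=0$ coming from the $dr$-free part (and compatible with $dd\sigma=0$). In $d\Hodge\varphi=0$ a term $r^4\sigma\wedge d\sigma=3r^4\sigma\wedge\psi_+$ appears and vanishes by $\sigma\wedge\psi_+=0$; the remainder is $r^3\,dr\wedge(d\psi_-+2\sigma\wedge\sigma)=0$, giving $d\psi_-=-2\sigma\wedge\sigma$. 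These are precisely the nearly K\"ahler structure equations \eqref{nk_structure_equations} recalled after Definition~\ref{definition1.1}. The main technical hurdle is the bookkeeping in the cone decomposition of $\Hodge\varphi$; once the signs and the powers of $r$ are correct, the remaining steps are routine.
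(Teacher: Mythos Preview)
Your argument is correct but proceeds along a genuinely different line from the paper. The paper works in flat coordinates on $\mathbb{R}^7$: it writes out the two-form $\langle J\,\cdot\,,\,\cdot\,\rangle$ explicitly as a sum of twenty-one monomials in the $dx^{ij}$, differentiates term by term to obtain $d\langle J\,\cdot\,,\,\cdot\,\rangle=3\varphi$, and pulls back to get $d\sigma=3\psi_+$. For the second structure equation it computes $d(N\chair{*}\varphi)=4{*}\varphi$ directly from the coordinate expression of ${*}\varphi$, and then uses the transitivity of $\mathrm{G}_2$ on $\mathbb{S}^6$ to reduce the verification of $4\,i^*{*}\varphi=2\sigma\wedge\sigma$ to a single-point check at $E_7$. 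Your route instead packages everything through the cone: writing $\varphi=r^2\,dr\wedge\sigma+r^3\psi_+$ and ${*}\varphi=-r^3\,dr\wedge\psi_-+\tfrac12 r^4\sigma\wedge\sigma$ and reading off both structure equations from $d\varphi=0$ and $d{*}\varphi=0$. Your approach is the more conceptual one and makes the link between nearly K\"ahler six-manifolds and parallel $\mathrm{G}_2$-structures on their cones transparent, at the cost of having to justify the cone decomposition of ${*}\varphi$ (which you do honestly flag as the main bookkeeping step). The paper's approach is more hands-on and entirely self-contained, trading insight for explicit calculation; it also produces as a by-product the concrete coordinate formula~\eqref{sigma_six_sphere} for $\sigma$, which the paper reuses immediately afterwards to compute the multi-moment map.
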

\begin{proof}
The result is not new (cf.\ e.g.\ \cite{Butruille}), we prove it for completeness. One can perform the calculations on $\mathbb{R}^7$ and then restrict the results to the sphere. Let $(x^k)_{k=1,\dots,7}$,
be global coordinates on $\mathbb{R}^7$. Let the one-form $dx^k$ be the dual of the coordinate vector field $\partial/\partial x^k$ for all $k$.
The two-form $\langle J{}\cdot{},{}\cdot{}\rangle $ turns out to have the following shape:
\begin{align}
\label{sigma_six_sphere}
\langle J{}\cdot{},{}\cdot{}\rangle & = x^3 dx^{12}-x^2 dx^{13} +x^5 dx^{14} - x^4 dx^{15}+x^7 dx^{16}-x^6 dx^{17} \nonumber \\
& \qquad +x^1 dx^{23} + x^6 dx^{24} -x^7 dx^{25} - x^4 dx^{26} + x^5 dx^{27} -x^7 dx^{34} \nonumber \\
& \qquad - x^6 dx^{35} + x^5 dx^{36} + x^4 dx^{37} +x^1 dx^{45} + x^2 dx^{46} - x^3 dx^{47} \nonumber \\
& \qquad -x^3 dx^{56} - x^2 dx^{57} + x^1dx^{67}.
\end{align}
A direct computation of its differential gives $d \langle J{}\cdot{},{}\cdot{}\rangle = 3\varphi$.
Pulling back this identity to $\mathbb{S}^6$ yields $d\sigma = 3\psi_+$. Further, $d \psi_- = -i^*d(N \lrcorner \thickspace {*}\varphi)$. 
By the expression of ${*}\varphi$ it follows that $d {}(N \chair {*}\varphi) = 4{*}\varphi$ and again the restrictions to $\mathbb{S}^6$ are equal. Thus the claim is $4i^*{*}\varphi = 2\sigma \wedge \sigma$. Up to a rotation in $\mathrm{G}_2$ mapping $p$ to $E_7$, and so $N$ to $\partial/\partial x^7$, we have 
\begin{equation*}
\sigma = N \chair \varphi = \partial/\partial x^7 \chair \varphi = dx^{16}-dx^{25}-dx^{34},
\end{equation*}
so $\sigma \wedge \sigma = -2(dx^{1256}+dx^{1346}-dx^{2345})$, and $i^*{*}\varphi = dx^{2345}-dx^{1346}-dx^{1256}$ is unchanged. Finally $d \psi_- = -2\sigma \wedge \sigma$, and this proves our claim.
\end{proof}

Now let a two-torus act on $\mathbb{R}^7 \cong \mathbb{C}^3 \oplus \mathbb{R}$ as follows. Take the maximal torus $T^2$ inside $\mathrm{SU}(3)$ given by matrices of the form $A_{\vartheta,\phi} \coloneqq \diag \bigl(e^{i\vartheta},e^{i\phi}, e^{-i(\vartheta+\phi)}\bigr)$, and let $A_{\vartheta,\phi}$ act effectively on the left on $(z^1,z^2,z^3,t) \in \mathbb{C}^3 \oplus \mathbb{R}$ as
\begin{equation*}
A_{\vartheta,\phi}(z^1,z^2,z^3,t) \coloneqq (e^{i\vartheta}z^1,e^{i\phi}z^2,e^{-i(\vartheta+\phi)}z^3,t).
\end{equation*}
Because of the convention chosen for \eqref{three_form_g2} we set $z^1 = x^1+ix^6, z^2 = x^5+ix^2, z^3 = x^4+ix^3$, and $t = x^7$. At $p=(z^1,z^2,z^3,t)$, the fundamental vector fields have the form
\begin{align*}
U_p & = -x^6\partial/\partial x^1-x^4\partial/\partial x^3+x^3\partial/\partial x^4+x^1\partial/\partial x^6, \\
V_p & =x^5\partial/\partial x^2-x^4\partial/\partial x^3+x^3\partial/\partial x^4-x^2\partial/\partial x^5.
\end{align*}
Plugging the two vectors in the two-form \eqref{sigma_six_sphere} and restricting to the six-sphere, one finds the multi-moment map
\begin{equation*}
\nu_{\mathbb{S}^6}(p) = 3\bigl(x^1(x^4x^5-x^2x^3)-x^6(x^3x^5+x^2x^4)\bigr).
\end{equation*} 
Using complex coordinates the $T^2$-invariance is evident:
\begin{equation}
\label{multi_moment_map_six_sphere_expression}
\nu_{\mathbb{S}^6}(p) = 3(\re(z^1)\re(z^2z^3)-\im(z^1)\im(z^2z^3)) = 3\re(z^1z^2z^3).
\end{equation}

\begin{prop}
\label{identification_quotient_sphere}
The orbit space $\nu_{\mathbb S^6}^{-1}(0)/T^2$ can be identified with $\mathbb{S}^3/\mathbb{Z}_2^2$.
\end{prop}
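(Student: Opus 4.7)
The plan is to exhibit both $\nu_{\mathbb S^6}^{-1}(0)/T^2$ and $\mathbb S^3/\mathbb Z_2^2$ as the same semi-algebraic subset of Euclidean space via their rings of invariants, and then conclude by a routine compact-Hausdorff argument.

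For the first identification, the $T^2$-action on $\mathbb C^3 \subset \mathbb C^3 \oplus \mathbb R$ has weights $(1,0), (0,1), (-1,-1)$ on the three summands and fixes $t$, so the ring of real $T^2$-invariants on $\mathbb C^3 \oplus \mathbb R$ is generated by $r_j^2 \coloneqq |z^j|^2$ for $j = 1, 2, 3$, the real and imaginary parts of $w \coloneqq z^1 z^2 z^3$, and $t$, subject to the single relation $w\bar w = r_1^2 r_2^2 r_3^2$. Combining this with the defining equation of $\mathbb S^6$ together with the condition $\re(w) = \nu_{\mathbb S^6}/3 = 0$ from~\eqref{multi_moment_map_six_sphere_expression}, and setting $y \coloneqq \im(w)$, the invariants identify $\nu_{\mathbb S^6}^{-1}(0)/T^2$ with the set
\begin{equation*}
X \coloneqq \bigl\{ (r_1, r_2, r_3, t, y) \in \mathbb R_{\geq 0}^3 \times \mathbb R^2 : y^2 = r_1^2 r_2^2 r_3^2,\ r_1^2 + r_2^2 + r_3^2 + t^2 = 1 \bigr\}.
\end{equation*}

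For the second identification, let $\mathbb Z_2^2 \subset T^2$ be the subgroup of elements with $\vartheta,\phi \in \{0,\pi\}$, which is the natural real form of the two-torus. It preserves the real locus $\mathbb R^4 = \mathbb R^3 \oplus \mathbb R \subset \mathbb C^3 \oplus \mathbb R$, and acts there by the double sign flips $(s_1, s_2, s_3, t) \mapsto (-s_1, -s_2, s_3, t)$ and $(s_1, s_2, s_3, t) \mapsto (s_1, -s_2, -s_3, t)$. Its ring of invariants on $\mathbb R^4$ is generated by $s_j^2$, $s_1 s_2 s_3$, and $t$, subject to $(s_1 s_2 s_3)^2 = s_1^2 s_2^2 s_3^2$. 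Restricting to $\mathbb S^3$ and substituting $r_j = |s_j|$, $y = s_1 s_2 s_3$ identifies $\mathbb S^3/\mathbb Z_2^2$ with exactly the same $X$.

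The step that demands care is orbit separation: one must verify that the listed invariants really distinguish orbits, not merely take the same value on them. On the $T^2$-side this reduces to a short case analysis according to how many of the $|z^j|$ vanish; on the $\mathbb Z_2^2$-side one observes that two sign patterns with the same $s_j^2$ and same $s_1 s_2 s_3$ differ by an even number of sign flips, i.e.\ by an element of $\mathbb Z_2^2$. Once orbit separation is established, both invariant maps descend to continuous bijections from the compact Hausdorff quotients to $X$, hence to homeomorphisms, and composing them yields the claimed identification.
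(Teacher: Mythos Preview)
Your argument is correct and is a genuinely different route from the paper's. The paper uses a slice argument: given $p=(z^1,z^2,z^3,t)$ with $\nu_{\mathbb S^6}(p)=0$, it uses the $T^2$-action to rotate two of the complex coordinates to be real and non-negative, whereupon $\re(z^1z^2z^3)=0$ forces the third to be purely imaginary; the resulting four real parameters cut out a copy of $\mathbb S^3$, and the residual freedom in choosing the representative is precisely the real part $\mathbb Z_2^2\subset T^2$. Your approach instead computes the separating invariants for both the $T^2$-action on $\nu_{\mathbb S^6}^{-1}(0)$ and the $\mathbb Z_2^2$-action on $\mathbb S^3$, and shows both quotients embed onto the same semi-algebraic set $X$. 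The paper's argument is shorter and more geometric but leaves the verification that distinct $\mathbb S^3$-representatives in the same $T^2$-orbit differ exactly by $\mathbb Z_2^2$ implicit; your invariant-theoretic approach makes orbit separation explicit on both sides and packages the conclusion cleanly via the compact--Hausdorff bijection argument, at the cost of a little more machinery.
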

\begin{proof}
Let $p=(z^1,z^2,z^3,t)$ be a point on $\mathbb S^6$ such that $\nu_{\mathbb S^6}(p)=0$. Up to acting on $p$ with the torus-action, we can always assume two of its complex coordinates to be real and non-negative. 
Therefore, the expression \eqref{multi_moment_map_six_sphere_expression} implies that $\nu_{\mathbb S^6}(p)=0$ when the remaining complex coordinate is purely imaginary.
Hence there is always a representative $T^2\cdot p \in \nu_{\mathbb S^6}^{-1}(0)/T^2$ lying on the three-sphere $t^2+\sum_{i=1}^3 \lvert z^i\rvert^2=1$. The real part of $T^2$ acting on it is isomorphic to $\mathbb{Z}_2^2$, and this proves the result.
\end{proof}

\begin{prop}
\label{critical_sets_six_sphere}
There are three two-dimensional spheres in $\mathbb{S}^6$ where $\nu_{\mathbb{S}^6}$ and $d\nu_{\mathbb{S}^6}$ vanish. These intersect at two common points. Further, there are two $T^2$-orbits
of critical points where $\nu_{\mathbb{S}^6}$ attains its extrema.
\end{prop}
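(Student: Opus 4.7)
The plan is to split the critical set of $\nu_{\mathbb{S}^6}$ using Proposition \ref{characterisation_critical_points}: the locus where both $\nu_{\mathbb{S}^6}$ and $d\nu_{\mathbb{S}^6}$ vanish corresponds to $\mathbb{R}$-linear dependence of $U_p$ and $V_p$, while the remaining critical points (those with $\nu_{\mathbb{S}^6}\neq 0$) are characterised pointwise by the identity $\nu_{\mathbb{S}^6}^2 = h^2$. In the ambient coordinates already introduced, the fundamental vector fields at $p = (z^1, z^2, z^3, t)$ read $U_p = (iz^1, 0, -iz^3, 0)$ and $V_p = (0, iz^2, -iz^3, 0)$, so the equation $\alpha U_p + \beta V_p = 0$ with $(\alpha, \beta)\in \mathbb{R}^2\setminus\{(0,0)\}$ forces at least two of $z^1, z^2, z^3$ to vanish. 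This carves out exactly three round two-spheres $S_k = \{p\in \mathbb{S}^6 : z^j = 0 \text{ for every } j\neq k\}\cong \mathbb{S}^2$, pairwise meeting only at the pole pair $(0,0,0,\pm 1)$, which settles the first half of the statement.

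For the remaining critical points I compute, using the ambient inner product on $\mathbb{R}^7$, that $g_{UU} = \lvert z^1\rvert^2 + \lvert z^3\rvert^2$, $g_{VV} = \lvert z^2\rvert^2 + \lvert z^3\rvert^2$ and $g_{UV} = \lvert z^3\rvert^2$, so that $h^2 = \lvert z^1\rvert^2\lvert z^2\rvert^2 + \lvert z^1\rvert^2\lvert z^3\rvert^2 + \lvert z^2\rvert^2\lvert z^3\rvert^2$. Setting $a_j = \lvert z^j\rvert^2$ and using the obvious bound $\nu_{\mathbb{S}^6}^2 \leq 9\, a_1 a_2 a_3$, the identity $\nu_{\mathbb{S}^6}^2 = h^2$ reads $a_1 a_2 + a_1 a_3 + a_2 a_3 \leq 9\, a_1 a_2 a_3$. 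Combined with the sphere constraint $a_1 + a_2 + a_3 + t^2 = 1$, two successive applications of AM-GM (first of $a_i a_j$ against $(a_1 a_2 a_3)^{2/3}$, then of $a_j$ against their sum) pin down $a_1 = a_2 = a_3 = 1/3$, $t = 0$, and $z^1 z^2 z^3 \in \mathbb{R}$. This AM-GM chain is the only step that is not essentially mechanical, and is the main obstacle of the argument.

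Finally, on the locus $L = \{(z^1, z^2, z^3, 0) : \lvert z^j\rvert = 1/\sqrt{3},\; z^1 z^2 z^3 \in \mathbb{R}\}$, writing $z^j = e^{i\theta_j}/\sqrt{3}$, I observe that the phase sum $\theta_1 + \theta_2 + \theta_3$ is invariant under the $T^2$-action and is constrained to lie in $\{0, \pi\}$ modulo $2\pi$; hence $L$ decomposes into two two-tori, each swept out by a single $T^2$-orbit. These two orbits correspond to $\nu_{\mathbb{S}^6} = \pm 1/\sqrt{3}$, and since $\mathbb{S}^6$ is compact these values are necessarily the global extrema of $\nu_{\mathbb{S}^6}$, completing the proposition.
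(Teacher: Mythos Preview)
Your proof is correct and takes a genuinely different route from the paper's argument. The paper works entirely in the ambient $\mathrm{G}_2$ picture: it writes $d\nu_{\mathbb{S}^6}=3g(P(U,V),\,\cdot\,)$, computes the cross product $P(U,V)$ explicitly in the real coordinates $x^1,\dots,x^7$, and solves the system $P(U,V)=\lambda N$ componentwise. The case $\lambda=0$ yields the three two-spheres directly by comparing coefficients, and the case $\lambda\neq 0$ is handled by using the $T^2$-action to normalise two phases and then solving the resulting algebraic system by hand.

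You instead feed everything through Proposition~\ref{characterisation_critical_points}. For the zero locus this is cleaner: the $\mathbb{R}$-linear dependence of $U_p,V_p$ in complex coordinates immediately forces two of the $z^j$ to vanish, with no need for the six bilinear equations the paper writes down. For the nonzero critical locus your approach is quite different in spirit: rather than solving equations, you combine the exact identity $\nu^2=h^2$ with the trivial bound $\nu^2\le 9a_1a_2a_3$ and the sphere constraint, and the AM--GM chain $3(a_1a_2a_3)^{2/3}\le a_1a_2+a_1a_3+a_2a_3\le 9a_1a_2a_3$ together with $a_1a_2a_3\le\bigl((a_1+a_2+a_3)/3\bigr)^3\le 1/27$ squeezes everything to equality. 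This is an elegant replacement for the paper's coordinate chase; it also makes the value $\pm 1/\sqrt{3}$ and the condition $z^1z^2z^3\in\mathbb{R}$ fall out automatically from the equality cases, whereas the paper recovers these from the explicit solution. The trade-off is that your argument leans on Proposition~\ref{characterisation_critical_points}, while the paper's computation is self-contained within the $\mathrm{G}_2$ formalism set up in that section.
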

\begin{proof}
We have $d \nu_{\mathbb{S}^6} = 3\psi_+(U,V,{}\cdot{}) = 3(i^*\varphi)(U,V,{}\cdot{}) = 3g(P(U,V),{}\cdot{})$, which vanishes if and only if $P(U,V)$ is parallel to $N$. Projecting $P(U,V)$ on each $\partial/\partial x^k$ one finds 
\begin{align*}
P(U,V) & = (x^4x^5-x^2x^3)\partial/\partial x^1 - (x^4x^6+x^1x^3)\partial/\partial x^2 \\
& \qquad - (x^5x^6+x^1x^2)\partial/\partial x^3 + (x^1x^5-x^2x^6)\partial/\partial x^4 \\
& \qquad + (x^1x^4-x^3x^6)\partial/\partial x^5 - (x^2x^4+x^3x^5)\partial/\partial x^6. 
\end{align*}
To see where $P(U,V)$ is proportional to $N = x^1\partial/\partial x^1+\cdots+x^7\partial/\partial x^7$ we need to find points $(x^1,\dots,x^7)$ such that the following equations hold for some real proportionality factor $\lambda$:
\begin{alignat*}{3}
\lambda x^7=0, \qquad & x^4x^5-x^2x^3 = \lambda x^1, \qquad && x^1x^5-x^2x^6 = \lambda x^4 \\
 \qquad & x^4x^6+x^1x^3 = -\lambda x^2, \qquad && x^1x^4-x^3x^6 = \lambda x^5, \\
 \qquad & x^5x^6+x^1x^2 = -\lambda x^3, \qquad && x^2x^4+x^3x^5 = -\lambda x^6.
\end{alignat*}

The case $\lambda = 0$ gives points $p$ where $P(U,V)=0$, i.e.\ $U_p$ and $V_p$ are linearly dependent over \nolinebreak $\mathbb{R}$. By comparing the expressions of $U_p,V_p$ one obtains three two-spheres of critical points where the multi-moment map vanishes:
\begin{enumerate}
\item $(x^1)^2+(x^6)^2+(x^7)^2=1$ and $x^2=x^3=x^4=x^5=0$.
\item $(x^2)^2+(x^5)^2+(x^7)^2=1$ and $x^1=x^3=x^4=x^6=0$.
\item $(x^3)^2+(x^4)^2+(x^7)^2=1$ and $x^1=x^2=x^5=x^6=0$.
\end{enumerate}
Note that the three spheres have the poles $(x^1,\dots,x^6,x^7) = (0,\dots,0,\pm 1)$ in common. 

Now let us switch to the case $\lambda \neq 0$. Assume we are at a critical point, so in particular $x^7=0$. Up to the action of $U$ and \nolinebreak $V$, we can assume $x^1 = x^2 = 0$ and $x^5,x^6 \geq 0$. The equations characterising critical points yield $x^4 = 0$ and $x^5x^6 = -\lambda x^3, x^3x^6 = -\lambda x^5, x^3x^5 = -\lambda x^6$. If one among $x^3,x^5,x^6$ vanishes, so do all the others, and we get a contradiction as we need solutions on the six-sphere. Therefore we can assume without loss of generality all of them non-zero, which gives $(x^i)^2 = \lambda^2$ for $i = 3,5,6$. We thus obtain two stationary $T^2$-orbits where $\nu_{\mathbb{S}^6}$ attains its maximum and minimum, and $\nu_{\mathbb{S}^6}(\mathbb{S}^6) = [-1/ \sqrt{3},1/ \sqrt{3}]$. 
\end{proof}

The three two-spheres found can be recovered by looking for points with non-trivial stabilisers according to Proposition \ref{characterisation_critical_points}. This is done by solving the equation
\begin{equation*}
A_{\vartheta,\phi}(z^1,z^2,z^3,t) = (z^1,z^2,z^3,t),
\end{equation*}
namely the equations $e^{i\vartheta}z^1 = z^1, e^{i\phi}z^2 = z^2, e^{-i(\vartheta + \phi)}z^3 = z^3$ with $(e^{i\vartheta}, e^{i\phi}) \neq (1,1)$. A discussion of the 
different cases yields the solutions
$$
\begin{cases}
(0,0,0,\pm 1), & \text{ poles fixed by all of } T^2, \\
t^2 + \lvert z^i \rvert^2 = 1, i = 1,2,3, & \text{ two-spheres of points fixed by a circle}.
\end{cases}
$$
Note the three two-spheres correspond to those found in the proof of Proposition \ref{critical_sets_six_sphere}. Projecting the latter to the orbit space $\mathbb{S}^6/T^2$ gives a graph of two points and three edges. The graph represents the set of points where $\mathbb Z_2^2$ in Proposition \ref{identification_quotient_sphere} acts non-freely. For $|z^i| \to 0$ the two-spheres meet at the common poles. Moreover, the spheres do not intersect each other at any point but the poles, so the edges of the graph are disjoint (see Figure \ref{fig:S6}). 

\begin{figure}
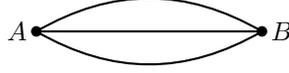

  \centering
  \tikzpicture
    \coordinate (A) at (0,0);
    \coordinate (B) at (3,0);
    \draw[thick][black] (A) -- (B);
    \draw[thick][black] (A) to[bend right] (B);
    \draw[thick][black] (A) to[bend left] (B);
    \fill (A) circle [radius=2pt] node[left] {$A$};
    \fill (B) circle [radius=2pt] node[right] {$B$};
  \endtikzpicture
  \caption{The three two-spheres in $\mathbb{S}^6/T^2$}
  \label{fig:S6}
\end{figure}

\section{The flag manifold}
\label{flag_manifold}

Let $F_{1,2}(\mathbb{C}^3)$ be the set of pairs $(L,U)$ of subspaces in $\mathbb{C}^3$, where $L$ is a complex line contained in the complex plane $U$. Such pairs are called \textit{flags}. The special unitary group $\mathrm{SU}(3)$ acts transitively on $F_{1,2}(\mathbb{C}^3)$. Let $F_1,F_2,F_3$ be the standard basis of $\mathbb{C}^3$. It turns out that the isotropy group of the point $(\langle F_1 \rangle, \langle F_1,F_2 \rangle)$ is isomorphic to the diagonal two-torus \nolinebreak $T^2$, hence $F_{1,2}(\mathbb{C}^3)$ is a smooth manifold diffeomorphic to $\mathrm{SU}(3)/T^2$, and is called \emph{flag manifold} of $\mathbb{C}^3$. 

We now equip $\mathrm{SU}(3)/T^2$ with an almost Hermitian structure. A matrix $p \in \mathrm{SU}(3)$ acts on $\mathrm{SU}(3)$ by left translation and induces a pullback map $(p^{-1}_{\id})^* \colon \mathfrak{su}^*(3)^{\otimes  2} \to T_p^*\mathrm{SU}(3)^{\otimes  2}$. We can thus define a Riemannian metric $g$ on $\mathrm{SU}(3)$ such that
\begin{equation}
\label{metric_flag_pointwise}
g_p \coloneqq \re((p^{-1}_{\id})^*g_0),
\end{equation}
where $g_0$ is the Killing form on $\mathfrak{su}(3)$ normalised as $g_0(X,Y) \coloneqq (1/2)\tr (^t\conjugate{X}Y)$.
The metric $g$ is bi-invariant for $g_0$ is. In particular $g$ is invariant under the action of the maximal torus in $\mathrm{SU}(3)$ above, so descends to a metric on the flag manifold, which we still denote by $g$. To construct an almost complex structure $J$ we follow Gray \nolinebreak \cite[Section 3]{gray_symmetric}. Let $A \coloneqq \diag(e^{2\pi i/3}, e^{4\pi i/3}, 1)$ and define the conjugation map $\tilde{\vartheta} \colon \mathrm{SU}(3) \to \mathrm{SU}(3)$ so that $\tilde{\vartheta}(B) = ABA^{-1}$. It is clear by this definition that $\tilde{\vartheta}^3= \id$ (where the cubic exponential stands for composing three times) and that $\tilde{\vartheta}$ fixes the maximal torus $T^2$ in $\mathrm{SU}(3)$ above. So $\tilde{\vartheta}$ induces a map on the quotient $\vartheta \colon \mathrm{SU}(3)/T^2 \to \mathrm{SU}(3)/T^2$ that fixes the coset $T^2$ and satisfies $\vartheta^3 = \id$. We define $J_0$ at the identity as follows: for $X \in \mathfrak{su}(3)/\mathfrak{t}^2$ write $d\vartheta(X) = AXA^{-1} \eqqcolon -(1/2)X+(\sqrt{3}/2)J_0X$, so that 
\begin{equation}
\label{almost_complex_structure_flag}
J_0X = \tfrac{2}{\sqrt{3}}\bigl(AXA^{-1}+\tfrac{1}{2}X\bigr).
\end{equation}
The map $J_0\colon \mathfrak{su}(3)/\mathfrak{t}^2 \to \mathfrak{su}(3)/\mathfrak{t}^2$ is well defined as $A$ commutes with diagonal matrices. We now check that $J_0^2 = -\id$. Firstly, observe that $d \vartheta - \id$ is injective: if $AXA^{-1}-X = 0$, then $AX = XA$, and since $X$ is diagonalisable then $X$ is diagonal. Thus $X=0$ in $\mathfrak{su}(3)/\mathfrak{t}^2$ and $\id - d \vartheta$ is left-invertible. This amounts to say that $0 = \id + d \vartheta + d \vartheta^2$, or more explicitly that $X+AXA^{-1}+A^2XA^{-2} = 0$. Therefore
\begin{align*}
J_0^2X & = \tfrac{4}{3}(A(AXA^{-1}+\tfrac{1}{2}X )A^{-1} + \tfrac{1}{2}(AXA^{-1}+\tfrac{1}{2}X)) \\
& = \tfrac{4}{3}(A^2XA^{-2}+AXA^{-1}+X-\tfrac{3}{4}X) = -X,
\end{align*}
as we wanted. A similar computation shows $J_0$ is an isometry. We can move the operator $J_0$ to every point $p \in \mathrm{SU}(3)$ so that for each $Y \in T_p(\mathrm{SU}(3)/T^2)$ one has $J_p(Y) = pJ_0(p^{-1}Y)$. From the invariance of $g$ and $J$ it follows that $(g,J)$ is an almost Hermitian structure on the flag manifold of $\mathbb{C}^3$. 

To construct a nearly K\"ahler structure using $g$ and $J$, we work at the identity of $\mathrm{SU}(3)$ and define explicit basic forms $\sigma_0,\varphi_0,\psi_0$ satisfying $d\sigma_0 = 3\varphi_0$ and $d\psi_0 = -2\sigma_0 \wedge \sigma_0$. Finally we extend this structure to the whole flag manifold. A basis of \nolinebreak $\mathfrak{su}(3)$ is given by the matrices
\begin{alignat*}{4}
& E_1 = \left(
\begin{smallmatrix}
0 & i & 0 \\
i & 0 & 0 \\
0 & 0 & 0
\end{smallmatrix}
\right), \quad
&& E_2 = \left(
\begin{smallmatrix}
0 & 1 & 0 \\
-1 & 0 & 0 \\
0 & 0 & 0
\end{smallmatrix}
\right), \quad 
&& E_3 = \left(
\begin{smallmatrix}
0 & 0 & 1 \\
0 & 0 & 0 \\
-1 & 0 & 0
\end{smallmatrix}
\right), \quad 
&& E_4 = \left(
\begin{smallmatrix}
0 & 0 & i \\
0 & 0 & 0 \\
i & 0 & 0
\end{smallmatrix}
\right), \\
& E_5 = \left(
\begin{smallmatrix}
0 & 0 & 0 \\
0 & 0 & i \\
0 & i & 0
\end{smallmatrix}
\right), \quad 
&& E_6 = \left(
\begin{smallmatrix}
0 & 0 & 0 \\
0 & 0 & 1 \\
0 & -1 & 0
\end{smallmatrix}
\right), \quad 
&& E_7 = \left(
\begin{smallmatrix}
i & 0 & 0 \\
0 & 0& 0 \\
0 & 0 & -i
\end{smallmatrix}
\right), \quad 
&& E_8 = \left(
\begin{smallmatrix}
0 & 0 & 0 \\
0 & i & 0 \\
0 & 0 & -i
\end{smallmatrix}
\right).
\end{alignat*}
Denote by $e^k$ the dual of $E_k$. Using \eqref{metric_flag_pointwise} and \eqref{almost_complex_structure_flag} one can check that 
\begin{align}
\label{metric_flag} g_0 & = e^1 \otimes  e^1 + \cdots + e^6 \otimes e^6, \\
\label{almost_complex_structure_flag_origin} J_0 & = E_2 \otimes  e^1-E_1 \otimes  e^2+E_4 \otimes  e^3 -E_3 \otimes  e^4 + E_6 \otimes  e^5-E_5 \otimes  e^6.
\end{align} 
The results for the differentials of $e^1,\dots,e^6$ are
\begin{alignat}{2}
\label{differentials_flag}
de^1 &= e^{46}-e^{35}+e^{27}-e^{28}, \qquad && de^2 = e^{36} + e^{45} - e^{17}+e^{18}, \nonumber \\
de^3 & = e^{15}-e^{26} - 2e^{47}-e^{48}, \qquad  && de^4 = e^{52} + e^{61}+2e^{37}+e^{38}, \nonumber \\
de^5 & = e^{24}-e^{13}+e^{67}+2e^{68}, \qquad  && de^6 = e^{23} + e^{14}-e^{57}-2e^{58}.
\end{alignat}
Moreover $g_0$ and $J_0$ descend to the quotient $\mathfrak{su}(3)/\mathfrak{t}^2$.
\begin{prop}
\label{nk_structure_flag}
The forms on $\mathfrak{su}(3)$ given by
\begin{gather*}
\sigma_0 \coloneqq g_0(J_0{}\cdot{},{}\cdot{}) = e^{12} + e^{34} + e^{56}, \\
\varphi_0 \coloneqq -e^{136} + e^{246} - e^{235} - e^{145}, \qquad \psi_0 \coloneqq e^{135}-e^{245}-e^{146}-e^{236},
\end{gather*}
descend to the quotient $\mathfrak{su}(3)/\mathfrak{t}^2$ and satisfy $d\sigma_0 = 3\varphi_0$, $d\psi_0 = -2\sigma_0 \wedge \sigma_0$. As a consequence, the differential forms on $\mathrm{SU}(3)/T^2$ given by $\sigma_p = g_p(J_p{}\cdot{},{}\cdot{}),
\psi_{+|p} \coloneqq \varphi_0(p^{-1}{}\cdot{},p^{-1}{}\cdot{},p^{-1}{}\cdot{})$, and $\psi_{-|p} \coloneqq \psi_0(p^{-1}{}\cdot{},p^{-1}{}\cdot{},p^{-1}{}\cdot{})$, define a nearly K\"ahler structure on $F_{1,2}(\mathbb{C}^3)$.
\end{prop}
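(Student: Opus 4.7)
The plan is to verify everything at the origin of $\mathrm{SU}(3)/T^2$ and then transport by $\mathrm{SU}(3)$-equivariance, reducing the statement to computations on $\mathfrak{su}(3)$ using the Maurer--Cartan equations~\eqref{differentials_flag}. First I derive the asserted formula $\sigma_0 = e^{12}+e^{34}+e^{56}$: from~\eqref{almost_complex_structure_flag_origin} the operator $J_0$ swaps $E_1 \leftrightarrow E_2$, $E_3 \leftrightarrow E_4$, $E_5 \leftrightarrow E_6$ up to signs, and by~\eqref{metric_flag} $g_0$ is Euclidean in this basis, so $g_0(J_0{}\cdot{},{}\cdot{})$ is the sum of the standard area forms on the three $\{E_{2k-1}, E_{2k}\}$-planes. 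I then check that $\sigma_0, \varphi_0, \psi_0$ are $T^2$-basic. Horizontality is immediate by inspection since none of them contains $e^7$ or $e^8$, and $\mathrm{Ad}(T^2)$-invariance follows because the element $\diag(e^{i\alpha}, e^{i\beta}, e^{-i(\alpha+\beta)}) \in T^2$ acts on the three $\mathbb{C}$-lines $\Span\{E_1, E_2\}$, $\Span\{E_3, E_4\}$, $\Span\{E_5, E_6\}$ by the complex phases $e^{i(\alpha-\beta)}, e^{i(2\alpha+\beta)}, e^{i(\alpha+2\beta)}$, each of which preserves the complex structure and metric on the corresponding summand.

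The core computation is to verify $d\sigma_0 = 3\varphi_0$ and $d\psi_0 = -2\sigma_0 \wedge \sigma_0$ at the identity. Expanding each $d(e^{ij})$ by~\eqref{differentials_flag}, the terms in $de^i$ involving $e^7$ or $e^8$ wedge against the factor $e^i$ itself and drop out, which also confirms in passing that $d\sigma_0$ and $d\psi_0$ are horizontal. Collecting terms in $d\sigma_0$ yields $3e^{246} - 3e^{235} - 3e^{136} - 3e^{145}$, matching $3\varphi_0$. The computation for $d\psi_0$ is analogous but longer, since each $d(e^{ijk})$ has three factors; after cancellation the outcome should be $-4(e^{1234} + e^{1256} + e^{3456})$, which equals $-2\sigma_0 \wedge \sigma_0$ by the direct expansion $\sigma_0 \wedge \sigma_0 = 2(e^{1234} + e^{1256} + e^{3456})$. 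This bookkeeping is the main technical obstacle, but it reduces to a finite tabulation of wedge products.

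To finish, I extend $\sigma_0, \varphi_0, \psi_0$ to left-invariant forms on $\mathrm{SU}(3)$ which, being $T^2$-basic, descend to $F_{1,2}(\mathbb{C}^3) = \mathrm{SU}(3)/T^2$ as $\sigma, \psi_+, \psi_-$ exactly as defined in the statement. Since left-translation is a diffeomorphism commuting with $d$, the structure equations at the identity coset propagate to every point of the flag manifold. The identity $\sigma = g(J{}\cdot{},{}\cdot{})$ extends likewise by the $\mathrm{SU}(3)$-equivariance of $g$ and $J$, yielding the nearly K\"ahler structure.
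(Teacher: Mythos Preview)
Your approach is essentially the paper's: check that the forms are $T^2$-basic, verify the structure equations at the identity via the Maurer--Cartan relations~\eqref{differentials_flag}, and transport by left-invariance. The paper phrases basicness as $\iota_{E_k}=0$ and $\Lie_{E_k}=0$ for $k=7,8$, which is the infinitesimal version of your horizontality-plus-$\mathrm{Ad}$-invariance check.

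One point to correct: your mechanism for the $e^7,e^8$ terms dropping out is valid for $d\sigma_0$ but not for $d\psi_0$. In $d(e^{12})$ the $e^7,e^8$ contributions from $de^1$ carry a factor $e^2$ (and those from $de^2$ carry $e^1$), so they wedge to zero against the remaining factor; this is the pattern $de^{2k-1}\ni e^{2k}\wedge e^{7,8}$. But in $d(e^{135})$, say, the term $e^{27}$ from $de^1$ wedges against $e^{35}$ and survives. The $e^7,e^8$ components of $d\psi_0$ do vanish, but only after cancellation across all four monomials of $\psi_0$. The clean argument is the one you already set up: once $\psi_0$ is basic, $d\psi_0$ is automatically basic (Cartan's formula gives $\iota_{E_k}d\psi_0=\Lie_{E_k}\psi_0-d\iota_{E_k}\psi_0=0$), so you may discard the $e^7,e^8$ terms without tracking them. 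A secondary remark: your phase argument shows $\sigma_0$ is $\mathrm{Ad}(T^2)$-invariant, but for $\varphi_0,\psi_0$ you also need the three weights to sum to zero; with the $J_0$-holomorphic directions the relevant phases are $e^{i(\alpha-\beta)},e^{-i(2\alpha+\beta)},e^{i(\alpha+2\beta)}$ (note the sign on the second), whose product is $1$.
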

\begin{proof}
First of all, $\sigma_0,\varphi_0,\psi_0$ descend to the quotient because $\Lie_{E_k} \sigma_0 = 0 = \Lie_{E_k} \varphi_0 = \Lie_{E_k} \psi_0$, for $k = 7,8$, and their contractions with $E_7,E_8$ vanish. The results in \eqref{differentials_flag} imply $d\sigma_0 = 3\varphi_0$, $d\psi_0 =-2\sigma_0 \wedge \sigma_0$. The last part of the statement follows by the invariance of the latter equations under translation.
\end{proof}

Consider now the maximal torus $T^2$ in $\mathrm{SU}(3)$ given by the matrices $\diag(e^{i\vartheta},e^{i\varphi},e^{-i(\vartheta+\varphi)})$. Two linearly independent generators of its Lie algebra are $U = \diag(-i,2i,-i), V = \diag(-i,-i,2i)$, 
and at $p \in \mathrm{SU}(3)$ they induce infinitesimal generators of the action $U_p = Up$ and $V_p = Vp$. Thus $p^{-1}Up, p^{-1}Vp$ are vectors in the Lie algebra $\mathfrak{su}(3)$, which splits as $\mathfrak{t}^2 \oplus \mathfrak{m}$, $\mathfrak{m}$ containing matrices with zeros on the diagonal. So when we work on the quotient $\mathrm{SU}(3)/T^2$ we need to take the projections $(p^{-1}Up)_{\mathfrak{m}}$ and $(p^{-1}Vp)_{\mathfrak{m}}$. A matrix $p = (p_{ij})_{i,j=1,2,3} \in \mathrm{SU}(3)$ has determinant \nolinebreak $1$ and satisfies the conditions
\begin{alignat*}{2}
&\lvert p_{11} \rvert^2 + \lvert p_{21}\rvert^2 + \lvert p_{31}\rvert^2 = 1, \qquad && \conjugate{p}_{11}p_{12}+\conjugate{p}_{21}p_{22}+\conjugate{p}_{31}p_{32}=0,\\
&\lvert p_{12} \rvert^2 + \lvert p_{22}\rvert^2 + \lvert p_{32}\rvert^2 = 1, \qquad && \conjugate{p}_{11}p_{13}+\conjugate{p}_{21}p_{23}+\conjugate{p}_{31}p_{33}=0,\\
&\lvert p_{13} \rvert^2 + \lvert p_{23}\rvert^2 + \lvert p_{33}\rvert^2 = 1, \qquad && \conjugate{p}_{12}p_{13}+\conjugate{p}_{22}p_{23}+\conjugate{p}_{32}p_{33}=0.
\end{alignat*}
One can compute explicitly $p^{-1}Up, p^{-1}Vp$ and project them onto $\mathfrak{m}$: 
\begin{alignat*}{2}
(p^{-1}Up)_{\mathfrak{m}} & = \left(
\begin{matrix}
 & iz^1 & iz^2 \\
 i\conjugate{z}^1 & & iz^3 \\
 i\conjugate{z}^2 & i\conjugate{z}^3 & 
\end{matrix}
\right), \quad 
&&
\begin{cases}
z^1 = 3\conjugate{p}_{21}p_{22}, \\
z^2 = 3\conjugate{p}_{21}p_{23}, \\
z^3 = 3\conjugate{p}_{22}p_{23},
\end{cases}
\\
(p^{-1}Vp)_{\mathfrak{m}} & = 
\left(
\begin{matrix}
 & iw^1 & iw^2 \\
 i\conjugate{w}^1 & & iw^3 \\
 i\conjugate{w}^2 & i\conjugate{w}^3 & 
\end{matrix}
\right), \quad
&& 
\begin{cases}
w^1 = 3\conjugate{p}_{31}p_{32}, \\
w^2 = 3\conjugate{p}_{31}p_{33}, \\
w^3 = 3\conjugate{p}_{32}p_{33}.
\end{cases}
\end{alignat*}
Note that the coefficients $z^i$ and $w^k$ are all $T^2$-invariant, where here $T^2$ is the torus acting on the left. We write $(p^{-1}Up)_{\mathfrak{m}}$ and $(p^{-1}Vp)_{\mathfrak{m}}$ in terms of the basis of $\mathfrak{su}(3)/\mathfrak{t}^2$:
\begin{align}
\label{generator_one_action_flag}
(p^{-1}Up)_{\mathfrak{m}} & = \re z^1 E_1 - \im z^1 JE_1 - \im z^2 E_3\nonumber \\
& \qquad + \re z^2 JE_3 + \re z^3 E_5 - \im z^3 JE_5, \\
\label{generator_two_action_flag}
(p^{-1}Vp)_{\mathfrak{m}} & = \re w^1 E_1 - \im w^1 JE_1 - \im w^2 E_3 \nonumber\\
& \qquad + \re w^2 JE_3 + \re w^3 E_5 - \im w^3 JE_5.
\end{align}
The multi-moment map is then $\nu_{F_{1,2}(\mathbb{C}^3)}(p) = \sigma_0((p^{-1}Up)_{\mathfrak{m}}, (p^{-1}Vp)_{\mathfrak{m}})$, namely
\begin{align*}
\nu_{F_{1,2}(\mathbb{C}^3)}(p) & = -\re z^1 \im w^1 + \re w^1 \im z^1 - \im z^2 \re w^2 \nonumber \\
& \qquad + \re z^2 \im w^2 - \re z^3 \im w^3 + \re w^3 \im z^3 \nonumber \\
& = -\im (\conjugate{z}^1w^1-\conjugate{z}^2w^2+\conjugate{z}^3w^3) \\
& = -9\im (p_{21}\conjugate{p}_{22}\conjugate{p}_{31}p_{32}-p_{21}\conjugate{p}_{23}\conjugate{p}_{31}p_{33}+p_{22}\conjugate{p}_{23}\conjugate{p}_{32}p_{33}).
\end{align*}
Since $p^{-1} = {^t}\conjugate{p} \in \mathrm{SU}(3)$ then in particular $p_{21}\conjugate{p}_{31}+p_{22}\conjugate{p}_{32}+p_{23}\conjugate{p}_{33}=0$, so we can simplify our expression to get
\begin{align}
\label{multi_moment_map_flag_expression}
\nu_{F_{1,2}(\mathbb{C}^3)}(p) & = -9\im (-\conjugate{p}_{22}p_{32}p_{23}\conjugate{p}_{33}+2p_{22}\conjugate{p}_{32}\conjugate{p}_{23}p_{33}) \nonumber \\
& = -27\im(p_{22}\conjugate{p}_{23}\conjugate{p}_{32}p_{33}) \nonumber \\
& = 3\im(z^3\conjugate{w}^3).
\end{align}
Since $z^i,w^k$ are $T^2$-invariant it is clear that $\nu_{F_{1,2}(\mathbb{C}^3)}$ is $T^2$-invariant as well. 

\begin{prop}
\label{remark_flag}
The orbit space $\nu_{F_{1,2}(\mathbb C^3)}^{-1}(0)/T^2$ can be identified with $F_{1,2}(\mathbb{R}^3)/\mathbb{Z}_2^2$. 
\end{prop}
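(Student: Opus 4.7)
The plan is to adapt the argument for $\mathbb{S}^6$ (Proposition~\ref{identification_quotient_sphere}) to the flag manifold, working on the cover $\mathrm{SU}(3)$. Identify $F_{1,2}(\mathbb{C}^3) = \mathrm{SU}(3)/T^2$ with the quotient by right multiplication by the diagonal torus, so that the multi-moment torus acts on the left; the quotient $\nu_{F_{1,2}(\mathbb{C}^3)}^{-1}(0)/T^2$ then becomes $\tilde\nu^{-1}(0)/(T^2\times T^2)$, where $\tilde\nu$ is the $(T^2 \times T^2)$-invariant lift and the combined action sends $p_{ij}$ to $e^{i(\phi_i + \psi_j)} p_{ij}$. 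My goal is to prove $\tilde\nu^{-1}(0) = (T^2\times T^2) \cdot \mathrm{SO}(3)$ and then identify the residual discrete action.

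First I would check the easy inclusion: from \eqref{multi_moment_map_flag_expression}, $\tilde\nu$ vanishes identically on real matrices, so $(T^2\times T^2) \cdot \mathrm{SO}(3) \subseteq \tilde\nu^{-1}(0)$. For the reverse, given $p$ with $\tilde\nu(p) = 0$ I would produce $A, B \in T^2$ with $A p B \in \mathrm{SO}(3)$, equivalently solve the congruences $\phi_i + \psi_j \equiv -\arg p_{ij} \pmod{\pi}$ subject to $\sum \phi_i, \sum \psi_j \in 2\pi \mathbb{Z}$.

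The main obstacle is to extract enough phase constraints from the single scalar identity $\tilde\nu(p) = 0$. The key input is unitarity $p^{-1} = p^*$: the cofactor identity $\bar p_{ii} = p_{jj} p_{kk} - p_{jk} p_{kj}$ multiplied by $p_{ii}$ gives
\[
p_{ii} p_{jj} p_{kk} - p_{ii} p_{jk} p_{kj} = |p_{ii}|^2 \in \mathbb{R},
\]
with analogous identities for off-diagonal entries. Together these show that the imaginary parts of all six three-cycle products $C_\sigma = p_{1\sigma(1)} p_{2\sigma(2)} p_{3\sigma(3)}$ coincide with $\tilde\nu(p)/27$, so $\tilde\nu = 0$ forces every $C_\sigma$ to be real. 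Since $(\phi_i + \psi_j)$ has rank at most two as a real matrix, the needed $2 \times 2$ minor-arguments $\arg(p_{ij} p_{i'j'}) - \arg(p_{ij'} p_{i' j})$ are all differences of pairs of cycle-arguments and therefore vanish mod $\pi$; the remaining sign/parity condition reduces to $\prod_\sigma C_\sigma = (\prod_{ij} p_{ij})^2 \ge 0$, which I would argue from a continuity/connectedness input anchored at the base point $\mathrm{SO}(3)$. The combinatorial bookkeeping of these conditions is where the technical work concentrates.

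Once solvability is established, $A p B$ has real entries and determinant one, hence lies in $\mathrm{SO}(3)$. For the residual action I would observe that a generic $q \in \mathrm{SO}(3)$ has every entry non-zero; the requirement $A q B \in \mathrm{SO}(3)$ then forces each $a_i b_j \in \{\pm 1\}$, so $A, B \in \mathbb{Z}_2^2$ (the real diagonal matrices of determinant one). The lower-dimensional locus of non-generic $q$ is handled by compactness of $\tilde\nu^{-1}(0)$ and closedness of $(T^2\times T^2)\cdot \mathrm{SO}(3)$. Identifying the right-acting $\mathbb{Z}_2^2$ with the isotropy of the standard real flag gives $\mathrm{SO}(3)/\mathbb{Z}_2^2 = F_{1,2}(\mathbb{R}^3)$, after which the left-acting $\mathbb{Z}_2^2$ descends to the claimed $\mathbb{Z}_2^2$-action on $F_{1,2}(\mathbb{R}^3)$.
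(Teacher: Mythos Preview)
Your approach is sound and more conceptual than the paper's, but you over-complicate one step and under-specify another. The paper proceeds directly: use the four torus parameters (left and right) to make $p_{21},p_{22},p_{23},p_{32}$ real; then \eqref{multi_moment_map_flag_expression} reads $\nu=-27\,p_{22}p_{23}p_{32}\,\im p_{33}$, so $\nu=0$ forces either $p_{33}$ real or one of those three entries to vanish (in which case a torus parameter is freed up to real-ise $p_{33}$); orthogonality of rows two and three then makes $p_{31}$ real, and the flag description $\{([u_2],[u_3])\in\mathbb{CP}^2\times\mathbb{CP}^2:u_2\perp u_3\}$ finishes. This is entry-by-entry and short.

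Your route via the cofactor identities is genuinely different and uncovers a nice fact the paper does not state: all six cycle products $C_\sigma=p_{1\sigma(1)}p_{2\sigma(2)}p_{3\sigma(3)}$ share the common imaginary part $\nu/27$, so $\nu=0$ makes every $C_\sigma$ real and the $2\times2$ phase-minor conditions follow at once. However, your ``sign/parity'' worry is a red herring and the continuity argument is unnecessary. Once diagonal unitaries $A,B$ with $ApB$ real are found, $\det(ApB)=\pm1$; right-multiplying $B$ by $\diag(-1,1,1)$ flips this sign if needed, and then the scaling $(A,B)\mapsto(\zeta A,\zeta^{-1}B)$ with $\zeta^3=(\det A)^{-1}$ puts both in $T^2$. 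You should also say something about zero entries: either handle them directly (fewer phase constraints, hence easier), or note that $(T^2\times T^2)\cdot\mathrm{SO}(3)$ is compact, hence closed, and that matrices with all entries non-zero are dense in $\tilde\nu^{-1}(0)$. Finally, in your residual-action step, the conclusion ``$a_ib_j\in\{\pm1\}$ so $A,B\in\mathbb{Z}_2^2$'' only holds after quotienting by the ineffective central $\mathbb{Z}_3$ already flagged in the paper; make that explicit.
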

\begin{proof}
We can always act on $p \in \mathrm{SU}(3)/T^2$ with the two $T^2$-actions on the left and on the right to make $p_{21},p_{22},p_{23},p_{32}$ real. 
If $\nu_{F_{1,2}(\mathbb C^3)}(p)=0$, then by \eqref{multi_moment_map_flag_expression} at least one among $p_{22},p_{23},p_{32}$ is zero or $p_{33}$ is real.
In all cases we can make the second row and $p_{32},p_{33}$ real.
Further, using the orthogonality relation $p_{21}\conjugate{p}_{31}+p_{22}\conjugate{p}_{32}+p_{23}\conjugate{p}_{33}=0$ and the actions of the tori one can assume $p_{31}$ is real as well, 
so both second and third row of $p$ can be made real. Call such rows $u_2,u_3$ respectively. Since
$$F_{1,2}(\mathbb C^3) \cong \{([u_2],[u_3]) \in \mathbb{CP}^2 \times \mathbb{CP}^2: [u_2] \perp [u_3]\},$$
we find that $$\nu_{F_{1,2}(\mathbb C^3)}^{-1}(0)/T^2 = \{([u_2],[u_3]) \in \mathbb{RP}^2 \times \mathbb{RP}^2: [u_2] \perp [u_3]\}/\mathbb{Z}_2^2 \cong F_{1,2}(\mathbb R^3)/\mathbb{Z}_2^2,$$
where $\mathbb{Z}_2^2$ is the real part of $T^2$. 
\end{proof}

\begin{prop}
\label{critical_orbits_flag}
There are exactly two $T^2$-orbits of critical points where $\nu_{F_{1,2}(\mathbb{C}^3)}$ does not vanish. Thus they give maximum and minimum of $\nu_{F_{1,2}(\mathbb{C}^3)}$.
\end{prop}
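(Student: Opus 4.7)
The plan is to use Proposition \ref{characterisation_critical_points}(2): a point $p$ with $\nu_{F_{1,2}(\mathbb{C}^3)}(p) \neq 0$ is critical exactly when $V_p$ and $U_p$ are linearly dependent over $\mathbb{C}$ with a non-real ratio $\mu$. Reading off \eqref{generator_one_action_flag}--\eqref{generator_two_action_flag} via the identification $aE_{2k-1}+bE_{2k} \leftrightarrow a+ib$ on each of the three $J_0$-invariant pairs, the vectors $(p^{-1}Up)_{\mathfrak{m}}$ and $(p^{-1}Vp)_{\mathfrak{m}}$ correspond respectively to $(\bar z^1, iz^2, \bar z^3)$ and $(\bar w^1, iw^2, \bar w^3)$ in $\mathbb{C}^3$. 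The equation $V_p = \mu U_p$ then translates into
\begin{equation*}
w^1 = \bar\mu\,z^1, \qquad w^2 = \mu\,z^2, \qquad w^3 = \bar\mu\,z^3.
\end{equation*}

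Substituting the definitions of $z^i, w^k$ in terms of the matrix entries $p_{ij}$ and taking squared moduli, the three equations combine to give $|p_{3j}|^2 = |\mu|\,|p_{2j}|^2$ for all $j$; summing and using $\sum_j|p_{2j}|^2 = \sum_j|p_{3j}|^2 = 1$ forces $|\mu|=1$ and $|p_{3j}| = |p_{2j}|$. Since $\nu \neq 0$ rules out the degenerate vanishings, the ratios $t_j \coloneqq p_{3j}/p_{2j}$ are well-defined and unimodular, and the phase parts of the equations read $\bar t_1 t_2 = \bar\mu$, $\bar t_1 t_3 = \mu$, $\bar t_2 t_3 = \bar\mu$. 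Consistency of these three relations forces $\mu^3 = 1$, so $\mu \in \{\omega,\bar\omega\}$ with $\omega = e^{2\pi i/3}$, together with $t_2 = \bar\mu t_1$, $t_3 = \mu t_1$. The row orthogonality $\sum_j \bar p_{2j}p_{3j} = t_1(|p_{21}|^2 + \bar\mu|p_{22}|^2 + \mu|p_{23}|^2) = 0$, combined with $1+\omega+\omega^2 = 0$, then yields $|p_{2j}|^2 = 1/3$ for every $j$.

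The critical locus in $\mathrm{SU}(3)$ is thereby parameterised by four phases $(\alpha_1,\alpha_2,\alpha_3,\arg t_1) \in T^4$ together with the discrete choice $\mu \in \{\omega,\bar\omega\}$. Passing to $\mathrm{SU}(3)/T^2$ by modding out the right $T^2$-action (which shifts the $\alpha_j$ by $\delta_j$ with $\sum\delta_j = 0$ and fixes $\arg t_1$) leaves a $2$-torus for each $\mu$. A direct calculation shows that the induced left $T^2$-action on this 2-torus has Lie algebra matrix $\bigl(\begin{smallmatrix}0&3\\-1&-2\end{smallmatrix}\bigr)$, of determinant $3$; hence it is surjective, giving exactly one $T^2$-orbit for each $\mu$ (with stabiliser $\mathbb{Z}_3$), and so two critical orbits with $\nu \neq 0$ in total. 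Evaluating $\nu_{F_{1,2}(\mathbb{C}^3)}$ on the representative $p_{2j} = 1/\sqrt{3}$, $p_{3j} = t_j/\sqrt{3}$ yields $\nu = \pm 3\sqrt{3}/2$, and by compactness of $F_{1,2}(\mathbb{C}^3)$ these two orbits must be the loci of maximum and minimum.

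The main obstacle is the careful bookkeeping: one must track the asymmetric pattern $(\bar\mu, \mu, \bar\mu)$ in the three complex equations coming from the sign conventions in $\psi_+$, and then verify surjectivity of the induced left $T^2$-action on the quotient $2$-torus. Once these are in hand, the rest is a routine manipulation with cube roots of unity.
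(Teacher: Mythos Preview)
Your approach is correct and takes a genuinely different route from the paper. The paper computes $\psi_+(U,V,\cdot)$ explicitly, converts the vanishing condition into the three equations \eqref{critical_system_flag}, then uses the left and right torus actions to gauge four entries of $p$ to be real non-negative numbers and runs through a lengthy case analysis to locate the two extremal orbits. You instead invoke Proposition~\ref{characterisation_critical_points}(2) to recast criticality as $\mathbb{C}$-linear dependence $V_p=\mu U_p$; the resulting proportionality equations $(w^1,w^2,w^3)=(\bar\mu z^1,\mu z^2,\bar\mu z^3)$ force $|\mu|=1$, $\mu^3=1$ and $|p_{2j}|^2=1/3$ directly, with no case split. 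Your orbit count via the integral matrix of the induced left $T^2$-action is also cleaner than the paper's explicit construction of representative matrices. What the paper's approach buys is the concrete representatives \eqref{critical_point_one_flag}; what yours buys is brevity and a transparent explanation of why exactly two orbits arise (the cube-root-of-unity constraint on $\mu$).

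Two small points deserve a sentence of care. First, the reduction ``$\nu\neq 0$ rules out the degenerate vanishings'' is immediate for $p_{22},p_{23},p_{32},p_{33}$ from the formula $\nu=-27\,\im(p_{22}\bar p_{23}\bar p_{32}p_{33})$, but for $p_{21}$ (and symmetrically $p_{31}$) one should note that $p_{21}=0$ forces $z^1=z^2=0$, hence $w^1=w^2=0$, hence $p_{31}=0$, and then the $2\times 2$ block structure gives $w^3=-z^3$, i.e.\ $\mu=-1$, contradicting $\im\mu\neq 0$. Second, the matrix $\bigl(\begin{smallmatrix}0&3\\-1&-2\end{smallmatrix}\bigr)$ is written in the coordinates $(\gamma_1,\gamma_2)$ on the maximal torus (with $\gamma_3=-\gamma_1-\gamma_2$); it is worth saying so explicitly. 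Finally, your extreme value $\pm 3\sqrt{3}/2$ disagrees with the paper's stated $\pm\sqrt{3}/2$; a direct check with $p_{22}=p_{23}=p_{32}=1/\sqrt{3}$, $p_{33}=\omega/\sqrt{3}$ in $-27\,\im(p_{22}\bar p_{23}\bar p_{32}p_{33})$ gives $-3\sqrt{3}/2$, so your value is the correct one.
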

\begin{remark}
\label{remark_zero_critical_points}
Based on the algebraic computations that follow, the structure of zero critical sets is not as clear as in the case of the six-sphere. 
Hence we postpone its description until the construction of the graph, ignoring all cases yielding zero critical orbits.
\end{remark}
\begin{proof}
We compute directly $d\nu_{F_{1,2}(\mathbb{C}^3)} = 3\psi_+(U,V,{}\cdot{})$, where $U,V$ are now shorthands for the vectors \eqref{generator_one_action_flag}, \eqref{generator_two_action_flag}. By Proposition \ref{nk_structure_flag} the one-form $\psi_+(U,V,{}\cdot{})$ at $p$ turns out to be
\begin{align*}
\psi_+(U,V,{}\cdot{})_{|p} & = \re (z^3\conjugate{w}^2-z^2\conjugate{w}^3)e^1 + \im (z^3\conjugate{w}^2+z^2\conjugate{w}^3)Je^1 \\
& \qquad + \im (z^3w^1-z^1w^3) e^3 +\re (z^1w^3-z^3w^1)Je^3 \\
& \qquad + \re(\conjugate{z}^2w^1-\conjugate{z}^1w^2)e^5 + \im (\conjugate{z}^2w^1+\conjugate{z}^1w^2)Je^5.
\end{align*}
This implies the point $p \in \mathrm{SU}(3)/T^2$ is critical if and only if 
$$
z^2\conjugate{w}^3 = \conjugate{z}^3w^2, \qquad
z^1w^3=z^3w^1, \qquad 
\conjugate{z}^1w^2 = z^2\conjugate{w}^1,
$$
namely when
\begin{equation}
\label{critical_system_flag}
\begin{cases}
p_{22}\conjugate{p}_{23}\conjugate{p}_{31}p_{33} = \conjugate{p}_{21}p_{23}p_{32}\conjugate{p}_{33}, \\
\conjugate{p}_{22}p_{23}\conjugate{p}_{31}p_{32} = \conjugate{p}_{21}p_{22}\conjugate{p}_{32}p_{33}, \\
\conjugate{p}_{21}p_{23}p_{31}\conjugate{p}_{32} = p_{21}\conjugate{p}_{22}\conjugate{p}_{31}p_{33}.
\end{cases}
\end{equation}
As in the proof of Proposition \ref{remark_flag} we make $p_{21},p_{22},p_{23},p_{32}$ real and non-negative. Write $a=p_{21},b=p_{22},c=p_{23}$, and $d=p_{32}$. Then \eqref{critical_system_flag} is equivalent to the equations
\begin{equation*}
bc\conjugate{p}_{31}p_{33} = ac\conjugate{p}_{33}d, \qquad bcd\conjugate{p}_{31} = abp_{33}d, \qquad acp_{31}d = ab\conjugate{p}_{31}p_{33}.
\end{equation*}
Our set-up is invariant under cyclic permutations of columns or rows of $p$ up to a sign of $\nu_{F_{1,2}(\mathbb{C}^3)}$, so in order to work out stationary orbits we can distinguish the cases $c \neq 0$ and at least one between $a$ and $b$ is zero, or $a,b,c \neq 0$. 

In the first case the system is easy to discuss and generates critical points where the multi-moment map vanishes. In the second case $d$ cannot be $0$, otherwise the criticality conditions would imply either $p_{31}=0$ or $p_{33}=0$, namely $a=0$ or $c=0$. Then our equations are
\begin{equation*}
b\conjugate{p}_{31}p_{33} = a\conjugate{p}_{33}d, \qquad cp_{31} = a\conjugate{p}_{33}, \qquad cp_{31}d = b\conjugate{p}_{31}p_{33}.
\end{equation*}
Set $p_{31} \coloneqq \rho e^{i\vartheta}, p_{33} \coloneqq \sigma e^{i\varphi}$, so that the system becomes
\begin{equation}
\label{arguments}
b\rho \sigma e^{i(\varphi-\vartheta)} = a\sigma e^{-i\varphi}d, \qquad c\rho e^{i\vartheta} = a\sigma e^{-i\varphi}, \qquad c\rho e^{i\vartheta}d = b \rho \sigma e^{i(\varphi-\vartheta)}.
\end{equation}
Observe that $\rho = 0$ if and only if $\sigma = 0$, so $d=1$ and $b=0$, contradiction. So $p_{31},p_{33} \neq 0$ and a comparison of the arguments in \eqref{arguments} shows that $3\varphi \equiv 0, \vartheta \equiv -\varphi \pmod{2\pi}$. Comparing the radii we obtain $ad=b\rho, c\rho = a\sigma, cd = b\sigma$, so $\rho = ad/b$, $\sigma = cd/b$ and $p_{31} = (ad/b)e^{-i\varphi}, p_{33} = (cd/b)e^{i\varphi}$. Now second and third row of $p$ have unit length, whence $a^2d^2/b^2+d^2+c^2d^2/b^2 =\nolinebreak 1$, which implies $d=b$. Our matrix has then the form
$$
p=
\left(
\begin{matrix}
p_{11} & p_{12} & p_{13} \\
a & b & c \\
ae^{-i\varphi} & b & ce^{i\varphi}
\end{matrix}
\right),
$$
and the constraint $p \in \mathrm{SU}(3)$ gives
\begin{alignat*}{2}
&\lvert p_{11}\rvert^2 + 2a^2 = 1, \qquad &&\conjugate{p}_{11}p_{12}=-ab(1+e^{i\varphi}), \\
&\lvert p_{12}\rvert^2 + 2b^2 = 1, \qquad &&p_{11}\conjugate{p}_{13}=-ac(1+e^{i\varphi}), \\
&\lvert p_{13}\rvert^2 + 2c^2 = 1, \qquad &&\conjugate{p}_{12}p_{13}=-bc(1+e^{i\varphi}).
\end{alignat*}
The second column in particular implies the chain of equalities
\begin{equation*}
\frac{\conjugate{p}_{11}p_{12}}{ab} =  \frac{p_{11}\conjugate{p}_{13}}{ac} = \frac{\conjugate{p}_{12}p_{13}}{bc} = -1-e^{i\varphi},
\end{equation*}
whereas the first one allows to write
$p_{11} = \sqrt{1-2a^2} e^{i\alpha}, p_{12} = \sqrt{1-2b^2}e^{i\beta}, p_{13} = \sqrt{1-2c^2}e^{i\gamma}$. We end up with three possibilities: $\varphi = 0, \varphi = 2\pi/3, \varphi = 4\pi/3$. In the first one, two rows of the matrix $p$ are the same, so the determinant vanishes. We can then assume $\varphi = 2\pi/3$, so that 
\begin{equation*}
\frac{\conjugate{p}_{11}p_{12}}{ab} =  \frac{p_{11}\conjugate{p}_{13}}{ac} = \frac{\conjugate{p}_{12}p_{13}}{bc} = e^{4\pi i/3}.
\end{equation*}
Comparing the arguments we find $\beta \equiv \alpha+4\pi/3 \pmod{2\pi}$ and $\gamma \equiv \alpha + 2\pi/3 \pmod{2\pi}$. Comparing the radii instead we obtain $a=b=c = 1/\sqrt{3}$. Imposing the condition $\det p = 1$ one gets $\alpha \equiv 7\pi/6 \pmod{2\pi}$, so 
\begin{equation}
\label{critical_point_one_flag}
p = \frac{1}{\sqrt{3}}\left(
\begin{matrix}
i\omega & i & i\omega^2 \\
1 & 1 & 1 \\
\omega^2 & 1 & \omega
\end{matrix}
\right), \quad \text{ with $\omega = e^{2\pi i/3}$.}
\end{equation}
This gives a $T^2$-orbit of points of minimum, the value of the multi-moment map at $p$ is $-\sqrt{3}/2$. The last case $\varphi = 4\pi /3$ can be discussed similarly, and the point we find turns out to be
\begin{equation*}
\conjugate{p} = \frac{1}{\sqrt{3}}\left(
\begin{matrix}
-i\omega^2 & -i & -i\omega \\
1 & 1 & 1 \\
\omega & 1 & \omega^2
\end{matrix}
\right), \quad \text{ with $\omega = e^{2\pi i/3}$.}
\end{equation*}
Since $\nu_{F_{1,2}(\mathbb{C}^3)}(\conjugate{p}) = -\nu_{F_{1,2}(\mathbb{C}^3)}(p)$, by \eqref{critical_point_one_flag} the value of the multi-moment map is $\sqrt{3}/2$. Summing up, we got two stationary orbits giving extrema, and $\im \nu_{F_{1,2}(\mathbb{C}^3)} = [-\sqrt{3}/2,\sqrt{3}/2]$.
\end{proof}

The goal now is to find which pairs of subspaces $(L,U)$ are fixed by some non-trivial element $T^2$ and compute their stabilisers. It will turn out that the action is not effective, a copy of $\mathbb{Z}_3$ in $T^2$ fixes all the flags. However, there is an isomorphism $T^2 \cong T^2/\mathbb{Z}_3$ given by $(e^{i\vartheta},e^{i\phi}) \mapsto (e^{3i\vartheta},e^{i(\vartheta-\phi)})$. In the case below where $\mathbb{Z}_3$ appears as a discrete stabilizer of all the flags, we can use this isomorphism to argue that the action of $T^2/\mathbb{Z}_3 \cong T^2$ is effective and the discrete stabilizers are all trivial.

Take a non-zero $z=(z^1,z^2,z^3) \in \mathbb{C}^3$ and assume that $L \coloneqq \Span(z)$ is a $T^2$-invariant one-dimensional subspace of $\mathbb{C}^3$. The equation we want to solve is  $A_{\vartheta,\phi}z = \lambda(\vartheta,\phi)z$, with $\lambda$ some complex-valued function of $\vartheta,\phi$. Explicitly 
\begin{equation*}
e^{i\vartheta}z^1 = \lambda z^1, \qquad e^{i\phi}z^2 = \lambda z^2, \qquad e^{-i(\vartheta+\phi)} z^3 = \lambda z^3.
\end{equation*} 

The case $e^{i\vartheta}=\lambda, e^{i\phi} = e^{i\vartheta}$ gives two subcases, either $3\vartheta \equiv 0 \pmod{2\pi}$ or $z^3 = 0$. In the former we have $\vartheta \in \{0,2\pi/3,4\pi/3\} \pmod{2\pi}$ and $z^i \neq 0$ for every $i = 1,2,3$, which gives a discrete stabilizer of $L$ as $\vartheta \equiv \phi \pmod{2\pi}$. This is a copy of $\mathbb{Z}_3$ and we can argue as above to conclude that the stabilizer is trivial. Denote by $F_1,F_2,F_3$ the standard basis of $\mathbb{C}^3$. A plain discussion of the remaining cases yields the solutions
$$
\begin{cases}
\mathbb{C}F_1,\mathbb{C}F_2,\mathbb{C}F_3, & \text{fixed by all of } T^2, \\
\mathbb{C}F_1\oplus \mathbb{C}F_2, \mathbb{C}F_1\oplus \mathbb{C}F_3, \mathbb{C}F_2\oplus \mathbb{C}F_3, & \text{fixed by a circle}.
\end{cases}
$$
Since the $T^2$-action preserves $\mathbb{C} F_i$ and the angles between two vectors, we have that $\mathbb{C} F_j \oplus \mathbb{C} F_k$ is preserved by $T^2$ as well, for different $i,j,k \in \{1,2,3\}$. On the other hand, since $\mathbb{S}^1$ preserves $\mathbb{C}z$, $z \in \Span\{F_i,F_j\}, i \neq j$, the pairs $(\mathbb{C}z, \mathbb{C}F_i \oplus \mathbb{C}F_j)$ and $(\mathbb{C}F_i, \mathbb{C}F_j \oplus \Span\{z\})$, are fixed by $\mathbb{S}^1$. Therefore, we have six points in $F_{1,2}(\mathbb{C}^3)$ fixed by all of $T^2$ and nine edges corresponding to two-dimensional subspaces of points fixed by $\mathbb{S}^1$. The six points are represented by the $A_{\alpha,\beta \gamma}$:
\begin{align*}
A_{1,12} = (\mathbb{C} F_1, \mathbb{C} F_1 \oplus \mathbb{C} F_2), \qquad & A_{2,23} = (\mathbb{C} F_2, \mathbb{C} F_2 \oplus \mathbb{C} F_3), \\
A_{1,13} = (\mathbb{C} F_1, \mathbb{C} F_1 \oplus \mathbb{C} F_3), \qquad & A_{3,13} = (\mathbb{C} F_3, \mathbb{C} F_1 \oplus \mathbb{C} F_3), \\
A_{2,12} = (\mathbb{C} F_2, \mathbb{C} F_1 \oplus \mathbb{C} F_2), \qquad & A_{3,23} = (\mathbb{C} F_3, \mathbb{C} F_2 \oplus \mathbb{C} F_3).
\end{align*}
The edges $a_i, i=1\dots,9$ are the following:
\begin{alignat*}{3}
& z_1 \in \Span\{F_1,F_2\}, \quad && z_2 \in \Span\{F_1,F_3\}, \quad && z_3 \in \Span\{F_2,F_3\},\\
& a_1 = (\mathbb{C} z_1, \mathbb{C} F_1 \oplus \mathbb{C} F_2), \quad && a_4 = (\mathbb{C} z_2, \mathbb{C} F_1 \oplus \mathbb{C} F_3), \quad && a_7 = (\mathbb{C} F_1, \mathbb{C} F_1 \oplus \mathbb{C} z_3),\\
& a_2 = (\mathbb{C} z_1, \mathbb{C} F_3 \oplus \mathbb{C} z_1), \quad && a_5 = (\mathbb{C} F_2, \mathbb{C} z_2 \oplus \mathbb{C} F_2), \quad && a_8 = (\mathbb{C} z_3, \mathbb{C} F_2 \oplus \mathbb{C} F_3),\\
& a_3 = (\mathbb{C} F_3, \mathbb{C} z_1 \oplus \mathbb{C} F_3), \quad && a_6 = (\mathbb{C} z_2, \mathbb{C} F_2 \oplus \mathbb{C} z_2), \quad && a_9 = (\mathbb{C} z_3, \mathbb{C} F_1 \oplus \mathbb{C} z_3).
\end{alignat*}
In order to figure out what the vertices of, say, $a_1$ are, one can take the limit $z \rightarrow F_1$ (resp.\  $z \rightarrow F_2$) and see that $a_1 \rightarrow A_{1,12}$ (resp.\  $a_1 \rightarrow A_{2,12}$), see Figure \ref{fig:flag}.
As for the case of the six-sphere above, the graph in Figure \ref{fig:flag} is the set of points where $\mathbb Z_2^2$ as in Proposition \ref{remark_flag} acts non-freely.

\begin{figure}
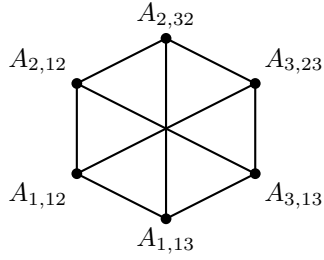

  \centering
  \tikzpicture
  [scale=1.2]
    \coordinate (A) at (0,0);
    \coordinate (B) at (0.98803162,-0.5);
    \coordinate (C) at (1.976,0);
    \coordinate (D) at (1.976,1);
    \coordinate (E) at (0.98803162,1.5);
    \coordinate (F) at (0,1);
    \draw[thick][black] (A) -- (B) ;
    \draw[thick][black] (B) -- (C); 
    \draw[thick][black] (C) -- (D);
    \draw[thick][black] (D) -- (E);
    \draw[thick][black] (E) -- (F);
    \draw[thick][black] (F) -- (A);
    \draw[thick][black] (A) -- (D);
    \draw[thick][black] (B) -- (E);
    \draw[thick][black] (C) -- (F);
     \fill (A) circle [radius=1.7pt] node[below left] {$A_{1,12}$};
    \fill (B) circle [radius=1.7pt] node[below] {$A_{1,13}$};
    \fill (C) circle [radius=1.7pt] node[below right] {$A_{3,13}$};
    \fill (D) circle [radius=1.7pt] node[above right] {$A_{3,23}$};
    \fill (E) circle [radius=1.7pt] node[above] {$A_{2,32}$};
    \fill (F) circle [radius=1.7pt] node[above left] {$A_{2,12}$};
  \endtikzpicture
  \caption{Graph for the flag manifold in $F_{1,2}(\mathbb{C}^3)/T^2$}
  \label{fig:flag}
\end{figure}

\section{The complex projective space}
\label{complex_projective_space}

The compact symplectic group $\mathrm{Sp}(2)$ acts on $\mathbb{C}^4 \cong \mathbb{H}^2$, transitively on the projective space $\mathbb{CP}^3$. An element in $p \in \mathrm{Sp}(2)$, $p = \left(\begin{smallmatrix} p_{11} & p_{12} \\ p_{21} & p_{22} \end{smallmatrix}\right)$, fixes $a\coloneqq[1:0:0:0] \in \mathbb{CP}^3$ when $p_{11}$ is a combination of $1,i$ and the quaternions $p_{12},p_{21}$ vanish. Since $p_{11}$ has unit length it must lie in a circle $\mathrm{U}(1)$. The isotropy group of $a$ is then isomorphic to $\mathrm{Sp}(1)\mathrm{U}(1)$, and $\mathbb{CP}^3$ is diffeomorphic to $\mathrm{Sp}(2)/\mathrm{Sp}(1)\mathrm{U}(1)$. Write $H \coloneqq \mathrm{Sp}(1)\mathrm{U}(1)$ and $G \coloneqq \mathrm{Sp}(2)$. We thus identify $H$ with a subgroup of $G$ containing elements of the form $\diag(e^{i\vartheta},\alpha)$, where $\alpha$ is a unit quaternion and $\vartheta$ an angle. We denote by $\mathfrak{g}$ and $\mathfrak{h}$ the Lie algebras of $G$ and $H$ respectively, so $\mathfrak{g}$ splits as $\mathfrak{h} \oplus \mathfrak{m}$.

On the Lie algebra $\mathfrak{g}$ we define the Killing form as $g_0(X,Y) \coloneqq \tr (^t\conjugate{X}Y) = -\tr (XY)$. This can be translated to any point $p$ yielding an inner product $g_p \coloneqq \re((p_{\id}^{-1})^*g_0)$ on every tangent space $T_pG$, and descends to the quotient modulo $H$ as it is bi-invariant. The construction of the almost complex structure $J$ follows from the existence of a diffeomorphism of order three as in the case of $F_{1,2}(\mathbb{C}^3)$. Consider $A \coloneqq \diag(e^{2\pi i/3},1)$ in $G$ and define an almost complex structure $J_p$ at the point $p$ by translating the endomorphism $J_0\colon \mathfrak{m} \to \mathfrak{m}, J_0X \coloneqq (2/\sqrt{3})(AXA^{-1}+\tfrac{1}{2}X)$. 
The Lie algebra $\mathfrak{g}$ is spanned by 
\begin{alignat*}{4}
E_0 & = 
\left(
\begin{matrix}
k & 0 \\
0 & 0 
\end{matrix}
\right), && \quad 
E_1 = 
\left(
\begin{matrix}
j & 0 \\
0 & 0 
\end{matrix}
\right), 
&& \quad 
E_2 = 
\tfrac{1}{\sqrt{2}}\left(
\begin{matrix}
0 & 1 \\
-1 & 0 
\end{matrix}
\right), && \quad
E_3 = 
\tfrac{1}{\sqrt{2}}\left(
\begin{matrix}
0 & i \\
i & 0 
\end{matrix}
\right), \\
E_4 & = 
\tfrac{1}{\sqrt{2}}\left(
\begin{matrix}
0 & j \\
j & 0 
\end{matrix}
\right), && \quad 
E_5 = 
\tfrac{1}{\sqrt{2}}\left(
\begin{matrix}
0 & k \\
k & 0 
\end{matrix}
\right), && \quad
E_6 = 
\left(
\begin{matrix}
i & 0 \\
0 & 0 
\end{matrix}
\right), && \quad 
E_7 = 
\left(
\begin{matrix}
0 & 0 \\
0 & i 
\end{matrix}
\right), \\
E_8 & = 
\left(
\begin{matrix}
0 & 0 \\
0 & j 
\end{matrix}
\right), && \quad
E_9 = 
\left(
\begin{matrix}
0 & 0 \\
0 & k 
\end{matrix}
\right).
\end{alignat*}
One can check the metric and the almost complex structure have the familiar shapes as in \eqref{metric_flag}, \eqref{almost_complex_structure_flag_origin}.
\begin{prop}
\label{nk_structure_projective_space}
The forms on $\mathfrak{g}=\mathfrak{sp}(2)$ given by
\begin{gather*}
\sigma_0 \coloneqq g_0(J_0{}\cdot{},{}\cdot{}) = e^{01} + e^{23} + e^{45}, \\
\varphi_0 \coloneqq e^{024}-e^{134}-e^{035}-e^{125}, \qquad \psi_0 \coloneqq e^{025}-e^{135}+e^{034}+e^{124},
\end{gather*}
descend to the quotient $\mathfrak{g}/\mathfrak{h}$ and satisfy $d\sigma_0 = 3\varphi_0$ and $d\psi_0 = -2\sigma_0 \wedge \sigma_0$. Consequently, the differential forms on $\mathrm{Sp}(2)/\mathrm{Sp}(1)\mathrm{U}(1)$ given by
$\sigma_p \coloneqq g_p(J_p{}\cdot{},{}\cdot{}), \psi_{+|p} \coloneqq \varphi_0(p^{-1}{}\cdot{},p^{-1}{}\cdot{},p^{-1}{}\cdot{})$, and $\psi_{-|p} \coloneqq \psi_0(p^{-1}{}\cdot{},p^{-1}{}\cdot{},p^{-1}{}\cdot{})$ define a nearly K\"ahler structure on $\mathbb{CP}^3$.
\end{prop}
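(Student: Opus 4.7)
The plan is to mimic the proof of Proposition \ref{nk_structure_flag}, adapting the calculation to the symplectic setting. First I would verify that $\sigma_0$, $\varphi_0$, $\psi_0$ are basic with respect to $\mathfrak{h}$, so that they descend to $\mathfrak{g}/\mathfrak{h}$. Concretely, I would check that their contractions with $E_6, E_7, E_8, E_9$ vanish (which is immediate from the formulas, since only the indices $0,\dots,5$ appear) and that the Lie derivatives $\Lie_{E_k} \sigma_0 = \Lie_{E_k} \varphi_0 = \Lie_{E_k} \psi_0 = 0$ for $k = 6,7,8,9$. By Cartan's formula and the vanishing of the contractions, this reduces to showing $E_k \chair d\sigma_0 = 0$ etc., which in turn follows once the differentials $de^0, \dots, de^5$ are computed.

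Next I would compute the Maurer--Cartan equations for the dual basis $\{e^0,\dots,e^9\}$. Using the identity $de^i(E_j,E_k) = -e^i([E_j,E_k])$, I would tabulate the brackets $[E_j,E_k]$ for $j,k \in \{0,\dots,5\}$, keeping track of which components lie in $\mathfrak{h} = \Span\{E_6,E_7,E_8,E_9\}$ and which in $\mathfrak{m} = \Span\{E_0,\dots,E_5\}$. The quaternionic products require some care (for example, $E_2 E_3 - E_3 E_2$ involves $jk - kj = 2i$ and similar identities), but the computation is mechanical. The outcome should be formulas for $de^0,\dots,de^5$ analogous to \eqref{differentials_flag}, whose restrictions to indices $\{0,\dots,5\}$ on both sides will yield the desired structure equations.

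With the differentials in hand, I would verify $d\sigma_0 = 3\varphi_0$ and $d\psi_0 = -2\sigma_0 \wedge \sigma_0$ by direct substitution, exactly as in Proposition \ref{nk_structure_flag}. Finally, for the passage to $\mathbb{CP}^3 = G/H$, I would invoke the fact that $g$ and $J$ are defined by left-translation from $g_0$ and $J_0$ and that $g_0, J_0, \sigma_0, \varphi_0, \psi_0$ are $\mathrm{Ad}(H)$-invariant (this reduces to invariance under $\mathrm{Sp}(1)\mathrm{U}(1)$, which can be verified on the two blocks separately since $H$ preserves the decomposition). The structure equations, being pointwise relations between left-invariant forms, then descend to $\mathbb{CP}^3$ and yield a nearly K\"ahler structure via \eqref{nk_structure_equations}.

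The main obstacle will be the bracket computation in step two: unlike the $\mathfrak{su}(3)$ case treated earlier, the basis here mixes several quaternionic units with a $\sqrt{2}$ normalisation in the off-diagonal blocks, so one must be careful with signs and with which brackets land in $\mathfrak{h}$ (and can therefore be dropped when passing to $\mathfrak{g}/\mathfrak{h}$). Once this table is correct, the rest is a short verification, and the existence of the $\mathrm{Ad}(H)$-invariant decomposition $\mathfrak{g} = \mathfrak{h} \oplus \mathfrak{m}$ guarantees the global extension.
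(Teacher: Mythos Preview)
Your proposal is correct and follows essentially the same approach as the paper: the paper's proof simply lists the differentials $de^0,\dots,de^5$ (obtained exactly as you describe, via the Maurer--Cartan relations $de^i(E_j,E_k)=-e^i([E_j,E_k])$) and then states that a routine computation gives $d\sigma_0=3\varphi_0$ and $d\psi_0=-2\sigma_0\wedge\sigma_0$. Your plan is a more detailed version of this, including the explicit check that the forms are $\mathfrak{h}$-basic and $\mathrm{Ad}(H)$-invariant, which the paper leaves implicit.
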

\begin{proof}
This follows from the expressions of the differentials of $e^k,k=0,\dots,5$:
\begin{alignat*}{2}
de^0& = 2e^{16}-e^{25}-e^{34}, \qquad && de^1 = -2e^{06}-e^{24}+e^{35}, \\
de^2 & = e^{05}+e^{14}-e^{36}+e^{37}+e^{48}+e^{59}, \qquad && de^3 = e^{04}-e^{15}+e^{26}-e^{27}-e^{49}+e^{58}, \\
de^4 & = -e^{03}-e^{12}-e^{28}+e^{39}-e^{56}-e^{57}, \qquad && de^5 = -e^{02}+e^{13}-e^{38}+e^{46}+e^{47}-e^{29}.
\end{alignat*}
A routine computation gives the result.
\end{proof}

Let $T^2$ be the maximal torus in $\mathrm{Sp}(2)$ given by matrices of the form $\diag(e^{i\vartheta},e^{i\varphi})$. Two generators of $\mathfrak{t}^2$ are $U = \diag(i,0)$ and $V = \diag(0,i)$. We want to compute the vectors $p^{-1}Up$ and $p^{-1}Vp$ in terms of the coefficients of $p$ as an element of $\mathrm{Sp}(2)$. We split $p$ as $p_{ik} = p_{ik}^1+p_{ik}^2j$, where $p^1,p^2$ are now $2 \times 2$ complex matrices. The condition $p^{-1} = {^t\conjugate{p}}$ is equivalent to ${^t}\conjugate{p}^1p^1+{^t}p^2\conjugate{p}^2 = \id, {^t}\conjugate{p}^1p^2-{^t}p^2\conjugate{p}^1 = 0$. Expanding and adding the condition $p^{-1} \in \mathrm{Sp}(2)$ we get the equations
\begin{alignat*}{2}
& \lvert p_{11}^1 \rvert^2 +\lvert p_{21}^1\rvert^2 + \lvert p_{11}^2\rvert^2 + \lvert p_{21}^2\rvert^2 = 1, \qquad && \conjugate{p}_{11}^1p_{12}^1 + \conjugate{p}_{21}^1p_{22}^1 +\conjugate{p}_{12}^2p_{11}^2+\conjugate{p}_{22}^2p_{21}^2 = 0, \\
& \lvert p_{12}^1\rvert^2 + \lvert p_{22}^1\rvert^2+\lvert p_{12}^2\rvert^2 + \lvert p_{22}^2\rvert^2 = 1, \qquad && \conjugate{p}_{11}^1p_{12}^2+\conjugate{p}_{21}^1p_{22}^2-\conjugate{p}_{12}^1p_{11}^2-\conjugate{p}_{22}^1p_{21}^2 = 0, \\
& \lvert p_{11}^1\rvert^2 + \lvert p_{12}^1\rvert^2+\lvert p_{11}^2\rvert^2 + \lvert p_{12}^2\rvert^2 = 1, \qquad && p_{11}^1\conjugate{p}_{21}^1+p_{12}^1\conjugate{p}_{22}^1 + \conjugate{p}_{21}^2p_{11}^2 + \conjugate{p}_{22}^2p_{12}^2=0, \\
& \lvert p_{21}^1\rvert^2 + \lvert p_{22}^1\rvert^2+\lvert p_{21}^2\rvert^2 + \lvert p_{22}^2\rvert^2 = 1, \qquad && p_{11}^1p_{21}^2+p_{12}^1p_{22}^2-p_{21}^1p_{11}^2-p_{22}^1p_{12}^2 = 0.
\end{alignat*}
We can thus calculate the vectors generating the action:
\begin{align*}
p^{-1}Up & = \left(\begin{matrix} i(\lvert p_{11}^1\rvert^2- \lvert p_{11}^2 \rvert^2) & i(\conjugate{p}_{11}^1p_{12}^1-p_{11}^2\conjugate{p}_{12}^2) \\ i(\conjugate{p}_{12}^1p_{11}^1-p_{12}^2\conjugate{p}_{11}^2) & i(\lvert p_{12}^1\rvert^2-\lvert p_{12}^2 \rvert^2) \end{matrix}\right) \\
& \qquad +\left(\begin{matrix} 2i\conjugate{p}_{11}^1p_{11}^2 & i(\conjugate{p}_{11}^1p_{12}^2+p_{11}^2\conjugate{p}_{12}^1) \\ i(\conjugate{p}_{12}^1p_{11}^2+p_{12}^2\conjugate{p}_{11}^1) & 2ip_{12}^2\conjugate{p}_{12}^1\end{matrix}\right)j, \\[10pt]
p^{-1}Vp & = \left(\begin{matrix} i(\lvert p_{21}^1\rvert^2-\lvert p_{21}^2\rvert^2) & i(\conjugate{p}_{21}^1p_{22}^1-p_{21}^2\conjugate{p}_{22}^2) \\ i(\conjugate{p}_{22}^1p_{21}^1-p_{22}^2\conjugate{p}_{21}^2) & i(\lvert p_{22}^1\rvert^2-\lvert p_{22}^2 \rvert^2)\end{matrix} \right) \\ 
& \qquad + \left(\begin{matrix} 2i\conjugate{p}_{21}^1p_{21}^2 & i(\conjugate{p}_{21}^1p_{22}^2+p_{21}^2\conjugate{p}_{22}^1) \\ i(\conjugate{p}_{22}^1p_{21}^2+p_{22}^2\conjugate{p}_{21}^1) & 2i\conjugate{p}_{22}^1p_{22}^2 \end{matrix} \right)j.
\end{align*}
The Lie algebra $\mathfrak{h}$ contains elements of the form $\left(\begin{smallmatrix} ia & 0 \\ 0 & ib\end{smallmatrix}\right)+\left(\begin{smallmatrix} 0 & 0 \\ 0 & c\end{smallmatrix}\right)j$, with $a,b$ real and $c$ complex. The projections $(p^{-1}Up)_{\mathfrak{m}}, (p^{-1}Vp)_{\mathfrak{m}}$ must be of the form $\bigl(\begin{smallmatrix} xj & \rho \\ -\conjugate{\rho} & 0\end{smallmatrix}\bigr)$, for $x$ a complex number and $\rho$ a quaternion, \nolinebreak so
\begin{align*}
(p^{-1}Up)_{\mathfrak{m}} &= \left(\begin{matrix} 0 & \alpha \\ -\conjugate{\alpha} & 0 \end{matrix}\right) +\left(\begin{matrix} \gamma j & 0 \\ 0 & 0\end{matrix}\right), \quad \begin{cases} \alpha = i(\conjugate{p}_{11}^1p_{12}^1-p_{11}^2\conjugate{p}_{12}^2)+i(\conjugate{p}_{11}^1p_{12}^2+p_{11}^2\conjugate{p}_{12}^1)j, \\ \gamma = 2i\conjugate{p}_{11}^1p_{11}^2, \end{cases} \\
(p^{-1}Vp)_{\mathfrak{m}} &= \left(\begin{matrix} 0 & \beta \\ -\conjugate{\beta} & 0 \end{matrix}\right) +\left(\begin{matrix} \delta j & 0 \\ 0 & 0\end{matrix}\right), \quad \begin{cases}\beta = i(\conjugate{p}_{21}^1p_{22}^1-p_{21}^2\conjugate{p}_{22}^2)+i(\conjugate{p}_{21}^1p_{22}^2+p_{21}^2\conjugate{p}_{22}^1)j, \\ \delta = 2i\conjugate{p}_{21}^1p_{21}^2.\end{cases}
\end{align*}
We now write $(p^{-1}Up)_{\mathfrak{m}}, (p^{-1}Vp)_{\mathfrak{m}}$ in terms of the basis introduced above. Then
\begin{align*}
(p^{-1}Up)_{\mathfrak{m}} & = 2\re (\conjugate{p}_{11}^1p_{11}^2) E_0 - 2\im (\conjugate{p}_{11}^1p_{11}^2)  E_1 \\
& \qquad + \sqrt{2}\Bigl(-\im (\conjugate{p}_{11}^1p_{12}^1-p_{11}^2\conjugate{p}_{12}^2) E_2+\re (\conjugate{p}_{11}^1p_{12}^1-p_{11}^2\conjugate{p}_{12}^2)E_3\\
& \qquad \qquad \qquad -\im (\conjugate{p}_{11}^1p_{12}^2+p_{11}^2\conjugate{p}_{12}^1)E_4 + \re (\conjugate{p}_{11}^1p_{12}^2+p_{11}^2\conjugate{p}_{12}^1)E_5\Bigr), \\
(p^{-1}Vp)_{\mathfrak{m}} & = 2\re (\conjugate{p}_{21}^1p_{21}^2) E_0 - 2\im (\conjugate{p}_{21}^1p_{21}^2)  E_1 \\
& \qquad + \sqrt{2}\Bigl(-\im (\conjugate{p}_{21}^1p_{22}^1-p_{21}^2\conjugate{p}_{22}^2) E_2+\re (\conjugate{p}_{21}^1p_{22}^1-p_{21}^2\conjugate{p}_{22}^2)E_3\\
& \qquad \qquad \qquad -\im (\conjugate{p}_{21}^1p_{22}^2+p_{21}^2\conjugate{p}_{22}^1)E_4 + \re (\conjugate{p}_{21}^1p_{22}^2+p_{21}^2\conjugate{p}_{22}^1)E_5\Bigr).
\end{align*}
It is convenient to write
\begin{align*}
(p^{-1}Up)_{\mathfrak{m}} & = f'E_0+e'E_1+\sqrt{2}(a'E_2+b'E_3+c'E_4+d'E_5), \\
(p^{-1}Vp)_{\mathfrak{m}} & = f''E_0+e''E_1+\sqrt{2}(a''E_2+b''E_3+c''E_4+d''E_5),
\end{align*}
where $\alpha = a'+b'i+c'j+d'k, \beta = a''+b''i+c''j+d''k, \gamma = e'+f'i, \delta = e''+f''i$. 
Using the conditions $p, p^{-1} \in \mathrm{Sp}(2)$, the multi-moment map $\nu_{\mathbb{CP}^3}(p) = \sigma((p^{-1}Up)_{\mathfrak{m}},(p^{-1}Vp)_{\mathfrak{m}})$ gets the form
\begin{align}
\label{multi_moment_map_complex_projective_space_expression}
\nu_{\mathbb{CP}^3}(p) & = \im (\gamma \conjugate{\delta}) + 2\re (i\alpha \conjugate{\beta}) \nonumber \\
& = 4\im (\conjugate{p}_{11}^1p_{11}^2p_{21}^1\conjugate{p}_{21}^2) - 2\im \bigl((\conjugate{p}_{11}^1p_{12}^1-p_{11}^2\conjugate{p}_{12}^2)(p_{21}^1\conjugate{p}_{22}^1-\conjugate{p}_{21}^2p_{22}^2) \nonumber \\
& \qquad \qquad \qquad \qquad \qquad \qquad \quad + (\conjugate{p}_{11}^1p_{12}^2+p_{11}^2\conjugate{p}_{12}^1)(p_{21}^1\conjugate{p}_{22}^2+\conjugate{p}_{21}^2p_{22}^1)\bigr) \nonumber \\
& = 4\im (\conjugate{p}_{11}^1p_{11}^2p_{21}^1\conjugate{p}_{21}^2) - 2\im \bigl((\conjugate{p}_{11}^1p_{12}^1-p_{11}^2\conjugate{p}_{12}^2)(2p_{21}^1\conjugate{p}_{22}^1+p_{11}^1\conjugate{p}_{12}^1+p_{12}^2\conjugate{p}_{11}^2) \nonumber \\
& \qquad \qquad \qquad \qquad \qquad \qquad \quad + (\conjugate{p}_{11}^1p_{12}^2+p_{11}^2\conjugate{p}_{12}^1)(2p_{21}^1\conjugate{p}_{22}^2+p_{11}^1\conjugate{p}_{12}^2-p_{12}^1\conjugate{p}_{11}^2)\bigr) \nonumber \\
& = 4\im (\conjugate{p}_{11}^1p_{11}^2p_{21}^1\conjugate{p}_{21}^2) - 4\im \bigl(\conjugate{p}_{11}^1p_{21}^1(p_{12}^1\conjugate{p}_{22}^1+p_{12}^2\conjugate{p}_{22}^2)+p_{11}^2p_{21}^1(\conjugate{p}_{12}^1\conjugate{p}_{22}^2-\conjugate{p}_{12}^2\conjugate{p}_{22}^1)\bigr) \nonumber \\
& = 4\im (\conjugate{p}_{11}^1p_{11}^2p_{21}^1\conjugate{p}_{21}^2) - 4\im \bigl(-\conjugate{p}_{11}^1p_{21}^1(p_{11}^1\conjugate{p}_{21}^1+p_{11}^2\conjugate{p}_{21}^2)+p_{11}^2p_{21}^1(\conjugate{p}_{11}^2\conjugate{p}_{21}^1-\conjugate{p}_{11}^1\conjugate{p}_{21}^2)\bigr) \nonumber \\
& = 12\im (\conjugate{p}_{11}^1p_{11}^2p_{21}^1\conjugate{p}_{21}^2) \nonumber \\
& = 3\im (\gamma \conjugate{\delta}),
\end{align}
which is indeed invariant under the torus action, because $\gamma,\delta$ are invariant. 

\begin{prop}
\label{complex_projective_space_zero_set}
The orbit space $\nu_{\mathbb{CP}^3}^{-1}(0)/T^2$ can be identified with $\mathbb{RP}^3/\mathbb{Z}_2^2$.
\end{prop}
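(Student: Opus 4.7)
Plan: I follow the pattern of Proposition~\ref{remark_flag}. Under the identification $\mathbb{CP}^3 = \mathrm{Sp}(2)/\mathrm{Sp}(1)\mathrm{U}(1)$, a coset is determined by the first column $(p_{11}, p_{21}) \in \mathbb{H}^2$ of $p \in \mathrm{Sp}(2)$ modulo right multiplication by the $\mathrm{U}(1)$-factor in $\mathrm{Sp}(1)\mathrm{U}(1)$. Splitting each quaternion as $p_{i1} = p_{i1}^1 + p_{i1}^2 j$, the left $T^2$-action together with this right $\mathrm{U}(1)$-action gives a three-parameter phase action on $(p_{11}^1, p_{11}^2, p_{21}^1, p_{21}^2) \in \mathbb{C}^4$; explicitly, $\vartheta$ multiplies $p_{11}^{1,2}$ by $e^{i\vartheta}$, $\varphi$ multiplies $p_{21}^{1,2}$ by $e^{i\varphi}$, and $\theta$ multiplies $p_{i1}^1, p_{i1}^2$ by $e^{i\theta}, e^{-i\theta}$.

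The main step is to show that every orbit in $\nu_{\mathbb{CP}^3}^{-1}(0)$ contains a representative whose first column lies in $\mathbb{R}^4$. I would use the three available phases to make $p_{11}^1$, $p_{21}^1$, and $p_{21}^2$ simultaneously real and non-negative; the relevant $3 \times 3$ weight submatrix is invertible, so this is always achievable. Substituting this normalisation in \eqref{multi_moment_map_complex_projective_space_expression}, the factor $\bar p_{11}^1 p_{21}^1 \bar p_{21}^2$ is a non-negative real, so $\nu_{\mathbb{CP}^3}(p) = 0$ forces $\im p_{11}^2 = 0$ on the dense open set where $p_{11}^1 p_{21}^1 p_{21}^2 \neq 0$. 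Consequently the whole first column is real, placing the representative in the embedded real subspace $\mathbb{RP}^3 = \{[x_0:x_1:x_2:x_3] \in \mathbb{CP}^3 : x_i \in \mathbb{R}\}$, which is itself contained in $\nu_{\mathbb{CP}^3}^{-1}(0)$ since $\im(\bar x_0 x_1 x_2 \bar x_3) = 0$ trivially. This produces a surjection $\mathbb{RP}^3 \twoheadrightarrow \nu_{\mathbb{CP}^3}^{-1}(0)/T^2$.

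To finish, identify the fibres. The $T^2$-elements that send real points to real points, taken modulo the ineffective $\mathbb{Z}_2$-kernel of $T^2 \to \mathrm{Diff}(\mathbb{CP}^3)$, form the sign-flip subgroup $\{\epsilon \in \{\pm 1\}^4 : \epsilon_0\epsilon_1\epsilon_2\epsilon_3 = 1\}/\{\pm I\} \cong \mathbb{Z}_2^2$ acting on $\mathbb{RP}^3$ by $[x_0:x_1:x_2:x_3] \mapsto [\epsilon_0 x_0: \epsilon_1 x_1 : \epsilon_2 x_2 : \epsilon_3 x_3]$. A check that the two-fold residual ambiguity in $\theta$ from the normalisation produces exactly the four cosets of this subgroup on each generic orbit then yields $\nu_{\mathbb{CP}^3}^{-1}(0)/T^2 \cong \mathbb{RP}^3/\mathbb{Z}_2^2$.

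The main subtlety will be correctly identifying this $\mathbb{Z}_2^2$: since the $T^2$-action on $\mathbb{CP}^3$ factors through a $\mathbb{Z}_2$-kernel, its "real part" is not the naive $\{0,\pi\}^2 \subset T^2$; for instance, $(\vartheta, \varphi) = (\pi/2, \pi/2)$ produces the additional sign flip $[x_0:x_1:x_2:x_3] \mapsto [x_0:-x_1:x_2:-x_3]$ after absorbing a factor of $i$ into the projective scaling. The degenerate strata where one of $p_{11}^1, p_{21}^1, p_{21}^2$ vanishes (so that a phase in the normalisation is not uniquely determined) must be handled separately to confirm that the identification extends continuously to the boundary, exactly as in the corresponding argument for the flag manifold.
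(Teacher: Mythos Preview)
Your proposal is correct and follows essentially the same approach as the paper: use the available phase freedoms to make three of the four complex entries in the first column real, then invoke the vanishing of the multi-moment map \eqref{multi_moment_map_complex_projective_space_expression} to force the fourth entry real, and identify the residual symmetry as $\mathbb{Z}_2^2$. The only cosmetic differences are that the paper chooses to normalise $p_{11}^1,p_{11}^2,p_{21}^1$ (and additionally uses the $\mathrm{Sp}(1)$-factor on the right to make $p_{12}$ real, which is not strictly needed for the identification) whereas you normalise $p_{11}^1,p_{21}^1,p_{21}^2$; and you are somewhat more explicit than the paper about how the ineffective $\mathbb{Z}_2$-kernel interacts with the ``real part'' of $T^2$.
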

\begin{proof}
Combine the action of $T^2$ on the left and $\mathrm{Sp}(1)\mathrm{U}(1)$ on the right on $p \in \mathrm{Sp}(2)/\mathrm{Sp}(1)\mathrm{U}(1)$ to obtain $p_{12},p_{11}^1,p_{11}^2,p_{21}^1$ real.
The orthogonality relations and expression \eqref{multi_moment_map_complex_projective_space_expression} imply that $\nu_{\mathbb{CP}^3}(p)=0$ when
$p_{21}^2$ is real as well. But by definition of the action of $\mathrm{Sp}(2)$ on $\mathbb{CP}^3$, $p \in \mathrm{Sp}(2)/\mathrm{Sp}(1)\mathrm{U}(1)$ corresponds to
$[p_{11}^1:p_{21}^1:p_{11}^2:p_{21}^2] \in \mathbb{CP}^3$. Thus $\nu_{\mathbb{CP}^3}^{-1}(0)/T^2 = \mathbb{RP}^3/\mathbb{Z}_2^2$, where $\mathbb{Z}_2^2$ is the real part of \nolinebreak $T^2$.
\end{proof}

\begin{prop}
There are exactly two $T^2$-orbits of critical points where the multi-moment map $\nu_{\mathbb{CP}^3}$ does not vanish. Thus they give maximum and minimum of $\nu_{\mathbb{CP}^3}$.
\end{prop}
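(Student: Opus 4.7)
My plan is to mirror the strategy used for the flag manifold (Proposition \ref{critical_orbits_flag}): compute the one-form $3\psi_+(U,V,\cdot)$ explicitly at a point $p$, extract the criticality equations as vanishing conditions on pairs of components, and then reduce the system using the $T^2$-action on the left together with the $\mathrm{Sp}(1)\mathrm{U}(1)$-action on the right.

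\medskip

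First I would plug the expressions for $(p^{-1}Up)_{\mathfrak{m}}$ and $(p^{-1}Vp)_{\mathfrak{m}}$ in the coordinates $(f',e',\sqrt2 a',\sqrt2 b',\sqrt2 c',\sqrt2 d')$ and $(f'',e'',\sqrt2 a'',\sqrt2 b'',\sqrt2 c'',\sqrt2 d'')$ into
\begin{equation*}
\varphi_0 = e^{024}-e^{134}-e^{035}-e^{125}
\end{equation*}
given in Proposition \ref{nk_structure_projective_space}. Reading off the coefficients of the resulting one-form in the basis $\{e^0,e^1,e^2,e^3,e^4,e^5\}$, grouping them into the complex pairs $(e^0,e^1)$, $(e^2,e^3)$, $(e^4,e^5)$ that match the complex structure $J_0$, and using $\alpha=a'+b'i+c'j+d'k$, $\beta=a''+b''i+c''j+d''k$, $\gamma=e'+f'i$, $\delta=e''+f''i$, I expect the vanishing of $\psi_+(U,V,\cdot)$ at $p$ to be equivalent to three identities of the schematic form
\begin{equation*}
\gamma\,\bar\beta_{\mathbb C_j} = \bar\delta\,\alpha_{\mathbb C_j}, \qquad \alpha\bar\beta\ \text{purely real/imaginary in the suitable quaternionic component},
\end{equation*}
i.e.\ a set of coupled equations in $\alpha,\beta,\gamma,\delta$ completely parallel to the system \eqref{critical_system_flag}.

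\medskip

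Second, I would restrict to critical points with $\nu_{\mathbb{CP}^3}(p)\ne 0$, which by \eqref{multi_moment_map_complex_projective_space_expression} forces both $\gamma\ne 0$ and $\delta\ne 0$, hence $p_{11}^1,p_{11}^2,p_{21}^1,p_{21}^2$ are all non-zero. Acting on the left by $T^2$ and on the right by the residual torus inside $\mathrm{Sp}(1)\mathrm U(1)$, I can normalise $p_{11}^1,p_{11}^2,p_{21}^1$ to be real and non-negative, analogously to the normalisation $a=p_{21},b=p_{22},c=p_{23},d=p_{32}\in\mathbb R_{\ge 0}$ used for the flag manifold. The unitarity of the columns and the relations ${}^t\bar p\,p=\id$ listed before \eqref{multi_moment_map_complex_projective_space_expression} then reduce the criticality system to a few equations in real moduli together with arguments of the remaining complex entries.

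\medskip

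Third, the argument equations should again produce a relation of the form $3\varphi\equiv 0\pmod{2\pi}$, yielding the three values $\varphi=0,2\pi/3,4\pi/3$. The value $\varphi=0$ will force two rows (or columns) of $p$ to coincide, contradicting $p\in\mathrm{Sp}(2)$; the two remaining values $\varphi=2\pi/3$ and $\varphi=4\pi/3$ will give two normalised matrices that are complex conjugates of each other, analogous to \eqref{critical_point_one_flag}. Each representative generates exactly one $T^2$-orbit, and by construction $\nu_{\mathbb{CP}^3}(\bar p)=-\nu_{\mathbb{CP}^3}(p)$, so the two orbits yield opposite non-zero values of $\nu_{\mathbb{CP}^3}$. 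Since $\mathbb{CP}^3$ is compact, $\nu_{\mathbb{CP}^3}$ attains maximum and minimum on critical sets, and since these two orbits are the only critical orbits with $\nu_{\mathbb{CP}^3}\ne 0$, they must be the extrema.

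\medskip

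The main obstacle will be the bookkeeping in Step 1: unlike the $\mathrm{SU}(3)$-case where complex coordinates $z^i,w^k$ cleanly package the off-diagonal entries, here the entries $\alpha,\beta$ are quaternionic while $\gamma,\delta$ are complex, so the reorganisation of $\psi_+(U,V,\cdot)$ into a concise set of complex criticality identities requires careful use of the pairing induced by $J_0$ and of the $\mathfrak{sp}(2)$ orthogonality relations listed just before the computation of $p^{-1}Up$ and $p^{-1}Vp$. Once those identities are in hand, the case analysis mirrors Proposition~\ref{critical_orbits_flag} closely.
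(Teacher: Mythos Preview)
Your overall strategy is the same as the paper's: compute $\psi_+(U,V,\cdot)$ from $\varphi_0$, repackage the six real vanishing conditions in terms of $\alpha,\beta,\gamma,\delta$, gauge-fix with the left $T^2$ and right $\mathrm{Sp}(1)\mathrm{U}(1)$ actions, and solve. The paper's repackaging is $\alpha\bar\beta\in\Span\{1,i\}$, $\alpha\delta-\beta\gamma\in\Span\{1,i\}$, $\alpha\bar\delta-\beta\bar\gamma\in\Span\{j,k\}$, which is what your Step~1 is reaching for.

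However, your Step~3 contains a concrete wrong expectation. The angle analysis here is \emph{not} the cube-root phenomenon $3\varphi\equiv 0\pmod{2\pi}$ of the flag manifold. Writing $p_{21}=d+\rho j$ with $\rho=Re^{ir}$ (and $p_{22}=\sigma+\tau j$), the paper's computation forces $e^{2ir}$ to be real and negative, i.e.\ $r=\pm\pi/2$, giving exactly two solutions with no spurious third case to discard. The two critical representatives are therefore not complex conjugates in the $\mathrm{SU}(3)$ sense but differ by $\rho\mapsto\bar\rho$ (and the induced change in $p_{22}$); the final values are $\nu_{\mathbb{CP}^3}=\pm 3/4$. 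If you push through expecting a $\mathbb Z_3$ symmetry you will not find one.

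A second point of execution: in Step~2 you restrict to the residual torus inside $\mathrm{Sp}(1)\mathrm{U}(1)$. The paper instead uses the full $\mathrm{Sp}(1)$ factor on the right to make the entire quaternion $p_{12}=c\in\mathbb R_{\ge0}$ real, and then sets $p_{11}=a+bj$ with $a,b\ge 0$ and $p_{21}=d+\rho j$ with $d\ge 0$. This stronger normalisation is what reduces the criticality system to something tractable; your three-parameter gauge leaves the quaternion $p_{12}$ essentially untouched and would make the subsequent case analysis considerably heavier.
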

\begin{proof}
Remark \ref{remark_zero_critical_points} applies to this case as well, so we concentrate on non-zero critical orbits. We want to find the points where $\psi_+(U,V,{}\cdot{}) = 0$. Again, we use $U$ and $V$ as shorthands for $(p^{-1}Up)_{\mathfrak{m}}, (p^{-1}Vp)_{\mathfrak{m}}$. Therefore
\begin{align*}
V \chair U \chair (e^0 \wedge e^2 \wedge e^4) & = 2(a'c''-a''c')e^0 + \sqrt{2}(c'f''-f'c'')e^2 + \sqrt{2}(f'a''-a'f'')e^4, \\
-V \chair U \chair (Je^0 \wedge Je^2 \wedge e^4) & = 2(c'b''-b'c'')Je^0 + \sqrt{2}(e'c''-c'e'')Je^2 + \sqrt{2}(b'e''-b''e')e^4, \\
-V \chair U \chair (e^0 \wedge Je^2 \wedge Je^4) & = 2(b''d'-b'd'')e^0 + \sqrt{2}(f'd''-d'f'')Je^2+\sqrt{2}(b'f''-f'b'')Je^4, \\
-V \chair U \chair (Je^0 \wedge e^2 \wedge Je^4) & = 2(a''d'-a'd'')Je^0 + \sqrt{2}(e'd''-d'e'')e^2 + \sqrt{2}(a'e''-e'a'')Je^4.
\end{align*}
The equation $\psi_+(U,V,{}\cdot{})_{|p}=0$ is then equivalent to
\begin{alignat*}{2}
& (a'c''-a''c')+(b''d'-b'd'')=0, \qquad && (e'c''-c'e'')+(f'd''-d'f'')=0, \\
& (b''c'-b'c'')+(a''d'-a'd'')=0, \qquad && (f'a''-a'f'')+(b'e''-b''e')=0, \\
& (c'f''-f'c'')+(e'd''-d'e'')=0, \qquad && (b'f''-f'b'')+(a'e''-e'a'') = 0.
\end{alignat*}
A direct calculation shows that these conditions may be rephrased using $\alpha,\beta,\gamma,\delta$ as
\begin{equation*}
\alpha \conjugate{\beta}  \in \Span\{1,i\}, \quad \alpha \delta - \beta \gamma \in \Span\{1,i\}, \quad \alpha \conjugate{\delta}-\beta\conjugate{\gamma} \in \Span\{ j,k\}.
\end{equation*}
In terms of the $p_{ij}^k$, the latter are respectively
$$
\begin{cases}
\conjugate{p}_{11}^1\conjugate{p}_{21}^1(p_{12}^1p_{22}^2-p_{12}^2p_{22}^1)+p_{11}^2p_{21}^2(\conjugate{p}_{12}^1\conjugate{p}_{22}^2-\conjugate{p}_{12}^2\conjugate{p}_{22}^1) \\
\qquad +\conjugate{p}_{11}^1p_{21}^2(p_{12}^2\conjugate{p}_{22}^2+p_{12}^1\conjugate{p}_{22}^1)-p_{11}^2\conjugate{p}_{21}^1(\conjugate{p}_{12}^1p_{22}^1+\conjugate{p}_{12}^2p_{22}^2)=0, \\[5pt]
\conjugate{p}_{11}^1p_{11}^2(\conjugate{p}_{21}^1p_{22}^2+p_{21}^2\conjugate{p}_{22}^1)-\conjugate{p}_{21}^1p_{21}^2(\conjugate{p}_{11}^1p_{12}^2+p_{11}^2\conjugate{p}_{12}^1)=0, \\[5pt]
p_{11}^1\conjugate{p}_{11}^2(\conjugate{p}_{21}^1p_{22}^1-p_{21}^2\conjugate{p}_{22}^2)-p_{21}^1\conjugate{p}_{21}^2(\conjugate{p}_{11}^1p_{12}^1-p_{11}^2\conjugate{p}_{12}^2)=0.
\end{cases}
$$
As in the proof of Proposition \ref{complex_projective_space_zero_set} we combine the left action of $T^2$ and the right action of $\mathrm{Sp}(1)\mathrm{U}(1)$ so that $p_{12} = c$ is a non-negative real number, $p_{11}=a+bj$ for $a,b$ non-negative real numbers, and $p_{21} = d+\rho j$, where $d$ is a non-negative real and $\rho$ is complex. The system giving critical points then reduces to
\begin{equation}
\label{system}
\begin{cases}
c\bigl(a(\tau d + \rho \conjugate{\sigma}) - b(\sigma d - \rho \conjugate{\tau})\bigr) = 0, \\
b\bigl(a(\tau d+ \rho \conjugate{\sigma}) - c\conjugate{\rho}d\bigr) = 0, \\
a\bigl(b(\sigma d - \rho \conjugate{\tau}) - c\conjugate{\rho}d\bigr) = 0,
\end{cases}
\end{equation}
and the conditions defining $\mathrm{Sp}(2)$ are 
\begin{alignat*}{2}
& a^2+b^2+d^2+\lvert \rho\rvert^2 = 1, \qquad && bc-\tau d + \rho \conjugate{\sigma} = 0, \\
& ac+\sigma d+\rho \conjugate{\tau} = 0, \qquad && c^2 + \lvert \sigma \rvert^2 + \lvert \tau\rvert^2 = 1.
\end{alignat*}
We distinguish two main cases: $c=0$ and $c > 0$. The first one yields only critical points where the multi-moment
map vanishes, so we jump to the second. The only interesting subcase is when $a,b$ are both non-zero. One can easily see that the first equation in \eqref{system} is redundant. The orthogonality relations for $\mathrm{Sp}(2)$ yield $b\sigma d = a\rho \conjugate{\sigma}$. So we have either $\sigma = 0$ or $\sigma \neq 0$. We focus on the latter, hence $a,b,c,\sigma \neq 0$. The critical conditions are
\begin{equation}
\label{system_2}
\begin{cases}
a(\tau d + \rho \conjugate{\sigma}) = c\conjugate{\rho}d, \\
b(\sigma d - \rho \conjugate{\tau}) = c\conjugate{\rho}d,
\end{cases}
\end{equation}
and orthogonality of the columns of matrices in $\mathrm{Sp}(2)$ yields $\tau d - \rho \conjugate{\sigma} = bc, \sigma d + \rho \conjugate{\tau} = -ac$. Plugging the last two equations in \eqref{system_2} we find:
\begin{alignat*}{4}
& a\rho\conjugate{\sigma} = b\sigma d, \qquad && a\tau d = -b\rho \conjugate{\tau}, \qquad && abc = a\tau d - b\sigma d, \qquad &&abc = -a\rho\conjugate{\sigma} - b\rho \conjugate{\tau}.
\end{alignat*}
Write $\rho = Re^{ir}, \sigma = Se^{is}$, and $\tau = Te^{it}$. Comparing the angles in the first two equations we find the congruences $s \equiv r-s \pmod{2\pi}, t \equiv \pi+r-t \pmod{2\pi}$, which imply $t = \pi /2+s \pmod{\pi}$. This gives two subcases: $e^{it} = ie^{is}$ and $e^{it} = -ie^{is}$, which we solve in the same fashion. Observe that $e^{ir}=e^{2is}$, so plugging these results in our starting equations $a\rho\conjugate{\sigma} = b\sigma d$ and $a\tau d = -b\rho \conjugate{\tau}$ we find $R = ad/b=bd/a$, hence $a=b$. Therefore, the equations $a\rho\conjugate{\sigma} = b\sigma d$ and $a\tau d = -b\rho \conjugate{\tau}$ simplify as $\rho \conjugate{\sigma} = \sigma d, \tau d = -\rho \conjugate{\tau}$. They imply $\sigma = \rho \conjugate{\sigma}/d$ and $\tau = -\rho\conjugate{\tau}/d$, so $\lvert \sigma\rvert^2+\lvert \tau\rvert^2 = (|\rho|^2/d^2)(|\sigma|^2+|\tau|^2)$, whence $d = |\rho|$, namely $\rho = de^{ir}$. But $2a^2+c^2=a^2+b^2+c^2 = 1$ and $a^2+b^2+d^2+|\rho|^2 = 1 = 2a^2+2d^2$, so $c^2=2d^2$, that is $c = \sqrt{2}d$. Observe now that by the conditions $abc = a\tau d - b\sigma d$ and $abc = -a\rho\conjugate{\sigma} - b\rho \conjugate{\tau}$ we have $\tau-\sigma = \sqrt{2}a$ and $\sigma+\tau = -\sqrt{2}ae^{ir}$, so
\begin{equation*}
2\tau = \sqrt{2}a(1-e^{ir}), \qquad 2\sigma = -\sqrt{2}a(1+e^{ir}).
\end{equation*}
Then $bd\sigma-b\rho\conjugate{\tau} = cd\conjugate{\rho}$ becomes 
$$-\tfrac{\sqrt{2}}{2}a^2d(1+e^{ir})-\tfrac{\sqrt{2}}{2}a^2de^{ir}(1-e^{-ir}) = \sqrt{2}d^3e^{-ir},$$
that is $-a^2e^{2ir} = d^2$. Therefore $e^{2ir}$ must be real and negative, whence $r = \pm \pi/2$ and $a = d$. But since the first column has unit length, $4a^2=1$, so $a=1/2=b=d=c/\sqrt{2} = |\rho|$. We obtain the critical points 
$$
\left(
\begin{matrix}
\tfrac{1}{2}+\tfrac{1}{2}j & \tfrac{1}{\sqrt{2}} \\
\tfrac{1}{2}+\tfrac{1}{2}ij & -\tfrac{1}{2\sqrt{2}}(1+i) + \tfrac{1}{2\sqrt{2}}(1-i)j
\end{matrix}
\right), \quad 
\left(
\begin{matrix}
\tfrac{1}{2}+\tfrac{1}{2}j & \tfrac{1}{\sqrt{2}} \\
\tfrac{1}{2}-\tfrac{1}{2}ij & -\tfrac{1}{2\sqrt{2}}(1-i) + \tfrac{1}{2\sqrt{2}}(1+i)j
\end{matrix}
\right).
$$
Finally $\nu_{\mathbb{CP}^3}(\mathbb{CP}^3) = [-3/4,3/4]$, so we end up with two critical $T^2$-orbits giving extrema.
\end{proof}

The maximal two-torus in $\mathrm{SU}(4)$ is given by elements $A_{\vartheta,\phi} = \diag(e^{i\vartheta}, e^{i\phi}, e^{-i\vartheta}, e^{-i\phi})$. Explicitly on $\mathbb{C}\mathbb{P}^3$ we have 
\begin{equation*}
A_{\vartheta,\phi}([z^1 \colon z^2 \colon z^3 \colon z^4]) = [e^{i\vartheta}z^1 \colon e^{i\phi}z^2 \colon e^{-i\vartheta}z^3 \colon e^{-i\phi}z^4].
\end{equation*}
Observe that this action is not effective, because $A_{0,0},A_{\pi,\pi}$ fix all points of $\mathbb{CP}^3$, and these are the only elements of the torus doing that. The morphism $(e^{i\vartheta},e^{i\phi}) \mapsto (e^{i2\vartheta},e^{i(\phi-\vartheta)})$ from the torus to itself induces an isomorphism $T^2 \cong T^2/\mathbb{Z}_2$, so the action of $T^2/\mathbb{Z}_2 \cong T^2$ on $\mathbb{CP}^3$ is effective. We want the solutions of $A_{\vartheta,\phi}([z^1\colon z^2 \colon z^3 \colon z^4]) = [z^1 \colon z^2 \colon z^3 \colon z^4]$, for a non-trivial $A_{\vartheta,\phi}$. The homogeneous coordinates allow us to simplify the equations so as to get $z^1 = \lambda z^1, e^{i(\phi-\vartheta)}z^2 = \lambda z^2, e^{-i2\vartheta}z^3 = \lambda z^3, e^{-i(\vartheta+\phi)}z^4 = \lambda z^4$, where $\lambda = \lambda(\vartheta,\phi)$ is a complex-valued function. We distinguish the cases $\lambda = 1$ and $z^1 = 0$. In the former we find the solutions
$$
\begin{cases}
[1 \colon 0\colon 0\colon 0], & \text{ fixed by all of } T^2, \\
[z^1\colon z^2\colon 0\colon 0],[z^1\colon 0\colon z^3\colon 0], [z^1\colon 0\colon 0\colon z^4], & \text{ fixed by a circle}.
\end{cases}
$$
Integrating these results with those of the case $z^1 = 0$, we have four points fixed by all of $T^2$, namely $[1\colon 0\colon 0\colon 0], [0\colon 1\colon 0\colon 0], [0\colon 0\colon 1\colon 0], [0\colon 0\colon 0\colon 1]$, and six points fixed by $\mathbb{S}^1$, which are
\begin{align*}
[z^1\colon z^2\colon 0\colon 0], &\qquad [z^1\colon 0\colon z^3\colon 0], \qquad [z^1\colon 0\colon 0\colon z^4],\\
 [0\colon z^2\colon z^3\colon 0], & \qquad [0\colon z^2\colon 0\colon z^4], \qquad [0\colon 0\colon z^3\colon z^4].
 \end{align*}
For any point fixed by $\mathbb{S}^1$ we see that if one of the coordinates approaches $0$ then it meets one of the points fixed by all of $T^2$, see Figure \ref{fig:CP3}.
Again, the graph in Figure \ref{fig:CP3} is the set of points where $\mathbb Z_2^2$ as in Proposition \ref{complex_projective_space_zero_set} acts non-freely.

\begin{figure}
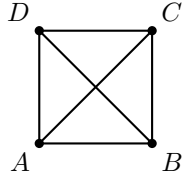

  \centering
  \tikzpicture
  [scale=0.5]
    \coordinate (A) at (0,0);
    \coordinate (B) at (3,0);
    \coordinate (C) at (3,3);
    \coordinate (D) at (0,3);
    \draw[thick][black] (A) -- (B);
    \draw[thick][black] (B) -- (C);
    \draw[thick][black] (C) -- (D);
    \draw[thick][black] (D) -- (A);
    \draw[thick][black] (A) -- (C);
    \draw[thick][black] (D) -- (B);
    \fill (A) circle [radius=3.5pt] node[below left] {$A$};
    \fill (B) circle [radius=3.5pt] node[below right] {$B$};
    \fill (C) circle [radius=3.5pt] node[above right] {$C$};
    \fill (D) circle [radius=3.5pt] node[above left] {$D$};
  \endtikzpicture
  \caption{Graph for the complex projective space in $\mathbb{CP}^3/T^2$}
  \label{fig:CP3}
\end{figure}

\section{The product of three-spheres}
\label{product_three_spheres}

It is convenient to view $\mathbb{S}^3 \times \mathbb{S}^3$ as $\mathrm{Sp}(1) \times \mathrm{Sp}(1) \subset \mathbb{H} \times \mathbb{H}$, and recall that $\mathrm{Sp}(1) \cong \mathrm{SU}(2)$. A triple $(h,k,l) \in \mathrm{SU}(2)^3$ acts on $\mathbb{S}^3 \times \mathbb{S}^3$ as $((h,k,l),(p,q)) \mapsto (hpl^{-1},kql^{-1})$. This action is obviously transitive and the stabiliser of the point $(1,1)$ is given by the triples $(h,h,h) \in \mathrm{SU}(2)^3$. We denote this isotropy group by $\mathrm{SU}(2)_{\Delta}$. Therefore $\mathbb{S}^3 \times \mathbb{S}^3$ has the structure of smooth manifold and is diffeomorphic to $\mathrm{SU}(2)^3/\mathrm{SU}(2)_{\Delta}$. 

We follow \cite{diooos} to construct an almost Hermitian structure. We define an almost complex structure $J$ on $\mathbb{S}^3 \times \mathbb{S}^3$ at the point $(p,q)$ as
\begin{equation*}
J_{(p,q)}(X,Y) \coloneqq \tfrac{1}{\sqrt{3}}(X-2pq^{-1}Y,2qp^{-1}X-Y), \qquad (X,Y) \in \mathfrak{sp}(1) \oplus \mathfrak{sp}(1).
\end{equation*}
The standard product metric $\langle{}\cdot{},{}\cdot{}\rangle$ on $\mathbb{S}^3 \times \mathbb{S}^3$ is not invariant under $J$, so we define a metric $g$ as the average of $\langle{}\cdot{},{}\cdot{}\rangle$ and $\langle J{}\cdot{},J{}\cdot{}\rangle$, and normalise it by a factor $1/3$. At the point $(p,q)$ its expression is
\begin{equation*}
g(X,Y) \coloneqq \tfrac{1}{6}(\langle X,Y \rangle + \langle JX,JY \rangle), \quad X,Y \in T_{(p,q)}(\mathbb{S}^3 \times \mathbb{S}^3).
\end{equation*}
Trivially, $J$ is $g$-orthogonal and $(g,J)$ is then an almost Hermitian structure.

Since $\mathbb{S}^3 \times \mathbb{S}^3$ is homogeneous for $\mathrm{SU}(2)^3$ we can work again at the identity $(1,1) \in \mathbb{S}^3 \times \mathbb{S}^3$ to construct a nearly K\"ahler structure. A basis for $\mathfrak{su}(2)^2$ is given by the vectors
\begin{alignat*}{3}
E_1 = (i,0), \quad & E_2 = (j,0), \quad && E_3 = (-k,0), \\
E_4 = (0,i), \quad & E_5 = (0,j), \quad && E_6 = (0,-k).
\end{alignat*}
Hence, a basis of each tangent space $T_{(p,q)}(\mathbb{S}^3 \times \mathbb{S}^3)$ is 
\begin{alignat*}{3}
E_1(p,q) = (pi,0), \quad & E_2(p,q) = (pj,0), \quad && E_3(p,q) = (-pk,0), \\
E_4(p,q) = (0,qi), \quad & E_5(p,q) = (0,qj), \quad && E_6(p,q) = (0,-qk).
\end{alignat*}
\begin{prop}
The forms on $\mathfrak{sp}(1) \times \mathfrak{sp}(1)$ given by
\begin{align*}
\sigma_0 & \coloneqq g_0(J_0{}\cdot{},{}\cdot{}) = \tfrac{2}{3\sqrt{3}}\bigl(e^{14}+e^{25}+e^{36}\bigr), \\
\varphi_0 & \coloneqq \tfrac{4}{9\sqrt{3}}\bigl(e^{126}-e^{135}-e^{156}+e^{234}+e^{246}-e^{345}\bigr), \\
\psi_0 & \coloneqq -J_0\varphi_0 = -\tfrac{4}{27}\bigl(2e^{123}+2e^{456}+e^{135}-e^{156}-e^{234}-e^{126}+e^{246}-e^{345}\bigr),
\end{align*}
satisfy $d\sigma_0 = 3\varphi_0$ and $d\psi_0 = -2\sigma_0 \wedge \sigma_0$. Consequently, the differential forms $\sigma_p \coloneqq g_p(J_p{}\cdot{},{}\cdot{})$, $\psi_{+|p} \coloneqq \varphi_0(p^{-1}{}\cdot{},p^{-1}{}\cdot{},p^{-1}{}\cdot{})$, and $\psi_{-|p} \coloneqq \psi_0(p^{-1}{}\cdot{},p^{-1}{}\cdot{},p^{-1}{}\cdot{})$, define a nearly K\"ahler structure on $\mathbb{S}^3 \times \mathbb{S}^3$.
\end{prop}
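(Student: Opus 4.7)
The plan is to reduce the proof to the two structure equations at the identity $(1,1)\in\mathbb{S}^3\times\mathbb{S}^3$, and then extend to the whole manifold using the Lie group structure of $\mathrm{Sp}(1)\times\mathrm{Sp}(1)$.

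First, I would write down the Maurer--Cartan relations for the left-invariant dual basis $e^1,\dots,e^6$. Since the two copies of $\mathfrak{sp}(1)$ commute and satisfy $[i,j]=2k$, $[j,k]=2i$, $[k,i]=2j$, and since $E_3=(-k,0)$ and $E_6=(0,-k)$ carry an extra sign, a direct application of $de^{k}(X,Y)=-e^{k}([X,Y])$ yields
\begin{equation*}
de^1=2e^{23},\ de^2=-2e^{13},\ de^3=2e^{12},\ de^4=2e^{56},\ de^5=-2e^{46},\ de^6=2e^{45}.
\end{equation*}

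Next, I would apply the Leibniz rule to $\sigma_0$ and $\psi_0$. For the first identity, the three differentials $d(e^{14}), d(e^{25}), d(e^{36})$ each produce two three-forms, collecting to exactly $3\varphi_0=\tfrac{4}{3\sqrt{3}}(e^{126}-e^{135}-e^{156}+e^{234}+e^{246}-e^{345})$. For $d\psi_0=-2\sigma_0\wedge\sigma_0$ it is convenient to observe that $e^{123}$ and $e^{456}$ are closed (being volume forms on the individual $\mathrm{Sp}(1)$-factors), so only the six mixed cubic monomials in $\psi_0$ contribute. Each of those has an exterior derivative equal to a single four-form proportional to one of $e^{1245}$, $e^{1346}$, $e^{2356}$, and the sum comes out to $\tfrac{16}{27}(e^{1245}+e^{1346}+e^{2356})$. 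On the other side, a direct computation gives $\sigma_0\wedge\sigma_0=-\tfrac{8}{27}(e^{1245}+e^{1346}+e^{2356})$, so $d\psi_0=-2\sigma_0\wedge\sigma_0$ as claimed.

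The passage to the whole of $\mathbb{S}^3\times\mathbb{S}^3$ is then automatic: by definition $\psi_\pm$ are the left-invariant extensions of $\varphi_0,\psi_0$ on $\mathrm{Sp}(1)\times\mathrm{Sp}(1)$, and the construction of $g$ and $J$ in terms of group multiplication and inversion makes $\sigma=g(J\cdot,\cdot)$ left-invariant too, agreeing with $\sigma_0$ at $(1,1)$. Since $d$ commutes with pullback by left translations, the two scalar identities proved at the origin extend globally, giving the nearly K\"ahler structure equations on $\mathbb{S}^3\times\mathbb{S}^3$. I expect the main obstacle to be the sign bookkeeping in $d\psi_0$: with the overall $-\tfrac{4}{27}$ prefactor of $\psi_0$ and each of the eight monomials contributing a few four-forms, one must check that the cancellations produce exactly $-2\sigma_0\wedge\sigma_0$ with no spurious terms.
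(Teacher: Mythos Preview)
Your approach is exactly the paper's: compute the Maurer--Cartan relations $de^i=2e^{jk}$ for $(ijk)$ cyclic in $(123)$ and $(456)$, then verify the two structure equations by direct expansion and extend by left-invariance. The paper's proof is a single sentence stating those relations and ``whence the result''; your proposal simply spells out the bookkeeping (the vanishing of $de^{123},de^{456}$, the six mixed terms in $d\psi_0$, and the explicit form of $\sigma_0\wedge\sigma_0$), all of which is correct.
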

\begin{proof}
The differentials of the duals $e^k$ of $E_k$ satisfy $de^i = 2e^{jk}$ for $(ijk)$ cyclic permutation of $(123)$ and $(456)$, whence the result.
\end{proof}

The group $\mathrm{SU}(2)^3$ has rank three, so we have an action of a three-torus $T^3$. A study of this case was performed by Dixon \cite{Dixon}, here we concentrate on actions of two-tori. An element $(t_1,t_2,t_3), t_k= e^{i\vartheta_k}$, of a maximal three-torus $T^3 \subset \mathrm{SU}(2)^3$ acts diagonally on $(g_1,g_2,g_3)\mathrm{SU}(2)_{\Delta}$:
\begin{equation}
\label{three_torus_action}
(t_1,t_2,t_3)(g_1,g_2,g_3)\mathrm{SU}(2)_{\Delta} \coloneqq (t_1g_1,t_2g_2,t_3g_3)\mathrm{SU}(2)_{\Delta}.
\end{equation}
Then it acts on $(p,q) \in \mathbb{S}^3 \times \mathbb{S}^3$ as $(t_1pt_3^{-1},t_2qt_3^{-1})$. Each pair of linearly independent vectors $a=(a_1,a_2)$ in $\mathbb{Z}^3$ yields a discrete group $\Gamma \coloneqq \mathbb{Z}^3 \cap (\mathbb{R}a_1 \oplus \mathbb{R}a_2)$ and a two-torus $T_a^2 = (\mathbb{R}a_1 \oplus \mathbb{R}a_2)/\Gamma$, in our three-torus \nolinebreak $T^3$. The $T^3$-action defined above yields the infinitesimal generators at the point $(p,q) \in \mathbb{S}^3 \times \mathbb{S}^3$, $U_1(p,q) = (ip,0)$, $U_2(p,q) = (0,iq)$, $U_3(p,q) = (-pi,-qi)$, which in terms of the basis $E_1,\dots,E_6$ are
\begin{align*}
U_1(p,q) & = \langle \conjugate{p}ip,i\rangle E_1(p,q)+\langle \conjugate{p}ip,j\rangle E_2(p,q)-\langle \conjugate{p}ip,k\rangle E_3(p,q), \\
U_2(p,q) & = \langle \conjugate{q}iq,i\rangle E_4(p,q)+\langle \conjugate{q}iq,j\rangle E_5(p,q)-\langle \conjugate{q}iq,k\rangle E_6(p,q), \\
U_3(p,q) & = -E_1(p,q)-E_4(p,q).
\end{align*}
A multi-moment map in this case is an equivariant map $\nu \colon \mathbb{S}^3 \times \mathbb{S}^3 \to \Lambda^2 \mathbb{R}^3 \cong \mathbb{R}^3$ (cf.\ \cite{MadsenSwann}). Its three real-valued components correspond to $\nu_i \coloneqq \sigma(U_j,U_k)$, with $(ijk)$ cyclic permutation:
\begin{equation*}
\nu(p,q) = \tfrac{2}{3\sqrt{3}}\bigl(\langle \conjugate{q}iq,i\rangle, \langle \conjugate{p}ip,i\rangle, \langle \conjugate{p}ip,\conjugate{q}iq\rangle\bigr).
\end{equation*}
Pointwise, the generators $U,V$ of the two-torus we are interested in are then rational linear combinations of the $U_i$s:
$$U = a_{11}U_1+a_{12}U_2+a_{13}U_3, \quad V  = a_{21}U_1+a_{22}U_2+a_{23}U_3,$$
so the multi-moment map for the $T^2$-action is $\nu_{\mathbb{S}^3 \times \mathbb{S}^3} \coloneqq \sigma(U,V)$:
$$\nu_{\mathbb{S}^3 \times \mathbb{S}^3}(p,q) = (a_{12}a_{23}-a_{22}a_{13})\nu_1-(a_{11}a_{23}-a_{21}a_{13})\nu_2+(a_{11}a_{22}-a_{12}a_{21})\nu_3.$$

Set $b\coloneqq a_1 \times a_2=(a_{12}a_{23}-a_{13}a_{22},-a_{11}a_{23}+a_{13}a_{21},a_{11}a_{22}-a_{12}a_{21})$ and $x=(x^1,x^2,x^3) \coloneqq (\langle \conjugate{p}ip,i\rangle, \langle \conjugate{p}ip,j\rangle,\langle \conjugate{p}ip,k\rangle),y= (y^1,y^2,y^3) \coloneqq (\langle \conjugate{q}iq,i\rangle, \langle \conjugate{q}iq,j\rangle,\langle \conjugate{q}iq,k\rangle)$. The multi-moment map has then the form 
\begin{equation*}
\nu_{\mathbb{S}^3 \times \mathbb{S}^3}(p,q) = \tfrac{2}{3\sqrt{3}}(b_1y^1+b_2x^1+b_3\langle x,y \rangle).
\end{equation*}
To show how the orbit spaces of the zero level sets look like we consider two examples.

\begin{example}
\label{ex_torus_rsi}
If the two-torus is generated by $U_1,U_2$ its elements have the form $(r,s,1) \in T^2 \subset T^3$. A suitable choice of the basis gives $\nu_{\mathbb{S}^3 \times \mathbb{S}^3}(p,q) = \langle x,y \rangle$. Notice that the $T^2$-action is defined via $(r,s,1)\cdot(p,q)=(rp,sq)$ and it is free (cf.\ Figure \ref{fig:S3xS3} \textbf{A}). The map $(p,q) \in \mathbb S^3 \times \mathbb S^3 \to (x,y) \in \mathbb S^2 \times \mathbb S^2$ is constant on the orbits, thus descends to a map $(\mathbb S^3 \times \mathbb S^3)/T^2 \to \mathbb S^2 \times \mathbb S^2$, which is a diffeomorphism. Therefore, the projection $(x,y) \in \nu_{\mathbb S^3 \times \mathbb S^3}^{-1}(0)/T^2 \to y \in \mathbb{S}^2$ realises the orbit space of the zero level set as a circle bundle over $\mathbb{S}^2$. This can be viewed as the unit tangent bundle over~$\mathbb{S}^2$. 
\end{example}

\begin{example}
\label{ex_torus_rsr}
If the two-torus is generated by $U_1+U_3$ and $U_2$, then its elements have the form $(r,s,r) \in T^2 \subset T^3$. A suitable choice of the basis gives $\nu_{\mathbb S^3 \times \mathbb S^3}(p,q) = \langle x-i,y\rangle$. The explicit action is $(r,s,r)\cdot (p,q)=(rpr^{-1},sqr^{-1})$, and it easily seen to be non-free. We distinguish the cases $y=\pm i$ and $y \neq \pm i$.

In the first case $\langle x-i,y\rangle = 0$ implies $x=i$. Call $\pi\colon \mathbb S^3 \times \mathbb S^3 \to \mathbb S^2 \times \mathbb S^2$ the map sending $(p,q)$ to $(x,y)$. Then $\pi^{-1}(i,i) = \{(e^{i\vartheta},e^{i\varphi}):\vartheta,\varphi \in \mathbb R\}$ and $\pi^{-1}(i,-i) = \{(e^{i\vartheta},e^{i\varphi}j):\vartheta,\varphi \in \mathbb R\}$. For both fibres $T^2$ acts trivially on $e^{i\vartheta}$, and one can then put $e^{i\varphi}=1$, so $q=1$ or $q=j$. Consider the projection $\nu_{\mathbb S^3 \times \mathbb S^3}^{-1}(0)/T^2 \to \mathbb S^2$ given by $(p,q) \mapsto y$ as in the example above. We have just proved that the preimages of $\pm i$ are two circles. A last observation is that the isotropy of each $(e^{i\vartheta},1)$ is the circle whose elements are triples $(r,r,r)$, whereas the isotropy of each $(e^{i\vartheta},j)$ is another circle with elements $(r,r^{-1},r)$ (compare this fact with the graph below, Figure \ref{fig:S3xS3} \textbf{C}).

Now look at the case $y \neq \pm i$. Here we have three subcases: $x= \pm i$ or $x \neq \pm i$. 
The case $x=i$ is trivial, as any $y$ satisfies $\langle x-i,y\rangle=0$, and therefore $\nu_{\mathbb S^3 \times \mathbb S^3}(e^{i\vartheta},q)=0$. Using the torus symmetry we can set $q^1,q^2$ real and non-negative. Therefore, the preimage of $y$ under the map $(p,q) \in \nu_{\mathbb S^3 \times \mathbb S^3}^{-1}(0)/T^2 \to y \in \mathbb S^2$ is a circle. When $x=-i$ then $y=e^{i\vartheta}j$, and again we can set $q^1,q^2$ real and non-negative so that the preimage of each $y$ is a circle.
In the final case one has $\langle x,j\rangle \neq 0$ or $\langle x,k\rangle \neq 0$, consequently $p$ must be of the form $p=p^1+p^2j$ with $p^2 \neq 0$. Similarly, $q=q^1+q^2j$ must have $q^2 \neq 0$. We can then use the torus action 
to make $p^2,q^1,q^2$ real and non-negative, whereas $p^1$ is free as it is fixed by the action. Therefore, the projection $y\colon \nu_{\mathbb S^3 \times \mathbb S^3}^{-1}(0)/T^2 \to \mathbb S^2$ realises again the orbit space of the zero level set as the
unit tangent bundle over the two-sphere $\mathbb S^2$.
\end{example}

Now let us focus on the critical points of $\nu_{\mathbb S^3 \times \mathbb S^3}$, so points $(p,q)$ where $\psi_+(U,V,{}\cdot{})=0$. The generators $U,V$ in terms of $E_1,\dots, E_6$ are
\begin{align*}
U & = (a_{11}\langle \conjugate{p}ip,i\rangle - a_{13})E_1+a_{11}\langle \conjugate{p}ip,j\rangle E_2 - a_{11}\langle \conjugate{p}ip,k\rangle E_3 \\
& \qquad + (a_{12}\langle \conjugate{q}iq,i\rangle-a_{13})E_4+a_{12}\langle \conjugate{q}iq,j\rangle E_5 - a_{12}\langle \conjugate{q}iq,k\rangle E_6, \\
V & = (a_{21}\langle \conjugate{p}ip,i\rangle - a_{23})E_1+a_{21}\langle \conjugate{p}ip,j\rangle E_2 - a_{21}\langle \conjugate{p}ip,k\rangle E_3 \\
& \qquad + (a_{22}\langle \conjugate{q}iq,i\rangle-a_{23})E_4+a_{22}\langle \conjugate{q}iq,j\rangle E_5 - a_{22}\langle \conjugate{q}iq,k\rangle E_6.
\end{align*}

A computation of $\psi_+(U,V,{}\cdot{})_{|(p,q)}$ gives
\begin{align*}
V \chair U \chair e^{126} & = -b_3x^2y^3e^1 + \bigl(b_3x^1y^3+b_1y^3\bigr) e^2 - b_2x^2 e^6, \\
V \chair U \chair e^{315} & = b_3x^3y^2 e^1+\bigl(b_3x^1y^2+b_1y^2\bigr)e^3-b_2x^3 e^5, \\
V \chair U \chair e^{156} & = \big(b_3x^1y^3+b_1y^3 \bigr)e^5 + \bigl(b_3x^1y^2+b_1y^2 \bigr) e^6, \\
V \chair U \chair e^{234} & = -\bigl(b_3x^3y^1+b_2x^3)e^2 -(b_3x^2y^1+b_2x^2)e^3, \\
V \chair U \chair e^{264} & = b_1y^3e^2-b_3x^2y^3e^4-\bigl(b_3x^2y^1+b_2x^2 \bigr)e^6, \\
V \chair U \chair e^{345} & = b_1 y^2e^3+b_3x^3y^2e^4 - \bigl(b_3x^3y^1+b_2x^3 \bigr) e^5.
\end{align*}
Therefore, the equation $\psi_+(U,V,{}\cdot{})_{|(p,q)} = 0$ is equivalent to the following system:
\begin{equation}
\label{critical_system_spheres}
\begin{cases}
b_3(x^3y^2-x^2y^3)=0\\
b_3(x^1y^i-x^iy^1)-b_2x^i=0, & i = 2,3\\
b_3(x^jy^1-x^1y^j)-b_1y^j=0. & j = 2,3.
\end{cases}
\end{equation}
\begin{remark}
As it turns out, $\mathbb{S}^3 \times \mathbb{S}^3$ is the only homogeneous example where saddle points appear and where extrema of the multi-moment map are not symmetric with respect to the origin. We first discuss system \eqref{critical_system_spheres} and then illustrate the various situations assigning explicit values to our parameters $b_1,b_2,b_3$.
\end{remark}
The vectors $x$ and $y$ lie in $\mathbb{S}^2 \subset  \im \mathbb{H}$. We distinguish the cases $b_3=0$ and $b_3\neq 0$. Since not all the $b_i$s vanish, when $b_3=0$ we have three subcases: if $b_1 \neq 0$ and $b_2=0$ then $y=\pm i$, whereas if $b_1=0$ and $b_2\neq 0$ then $x=\pm i$, and finally when $b_1\neq0\neq b_2$ then $x=\pm i,y=\pm i$.
If $b_3 \neq0$ then the first equation yields $x^3y^2-x^2y^3=0$. Summing second and third equations in \eqref{critical_system_spheres} with $i=j$ we get 
\begin{equation}
\label{sums}
b_1y^3+b_2x^3=0=b_1y^2+b_2x^2.
\end{equation}
We end up with three more cases:
\begin{enumerate}
\item If $b_1=b_2=0$ then we obtain at once that $x$ is parallel to $y$, thus $y = \pm x$. 
\item If $b_1 \neq 0$ and $b_2=0$ or $b_1=0$ and $b_2\neq 0$ then $x$ is parallel to $y$ and $y = \pm i$.
\item If $b_1 \neq 0$ and $b_2 \neq 0$ then by \eqref{sums} we get $x^2=-(b_1/b_2)y^2, x^3 = -(b_1/b_2)y^3$, so plugging these solutions in the system one obtains
$$
\begin{cases}
y^2(b_2b_3x^1+b_1b_3y^1+b_1b_2)=0, \\
y^3(b_2b_3x^1+b_1b_3y^1+b_1b_2)=0,
\end{cases}
$$
so either $y=\pm i$ (and then $x=\pm i$) or $b_2b_3x^1+b_1b_3y^1+b_1b_2=0$. 
\end{enumerate}
In the latter case the point $(x^1,y^1) \in \mathbb{R}^2$ lies on the line 
$$r\colon b_2b_3x^1+b_1b_3y^1+b_1b_2=0.$$
On the other hand, since $\lvert x \rvert^2 = 1$ and $x^2=-(b_1/b_2)y^2, x^3 = -(b_1/b_2)y^3$, we have $(x^1)^2 + (b_1^2/b_2^2)\bigl((y^2)^2+(y^3)^2\bigr) = 1$. But $ \lvert y\rvert^2 = 1$ as well, so $(y^2)^2+(y^3)^2=1-(y^1)^2$, and replacing this in the former identity we find a curve 
$$h\colon b_2^2(x^1)^2-b_1^2(y^1)^2 = b_2^2-b_1^2.$$ 
Therefore $(x^1,y^1)$ lies in the intersection between $h$ and $r$. Note that when $b_1 = b_2$ the curve $h$ is the union of the two lines $x^1=\pm y^1$. The slope of $r$ is $-b_2/b_1$ in general, so it is $-1$ when $b_1= b_2$. Since $b_2 \neq 0$ the only non-trivial intersection is between $y^1 = x^1$ and $y^1 = -x^1-b_2/b_3$, which gives $x^1 = y^1 = -b_2/2b_3$. Similarly, when $b_2 = -b_1$ the only non-trivial intersection is between $x^1 = -y^1$ and $y^1 = x^1 - b_2/b_3$, namely $x^1 = -y^1 = b_2/2b_3$. When $b_2 \neq \pm b_1$, the curve $h$ is a hyperbola and its asymptotes have equations $y^1 = \pm (b_2/b_1)x^1$. Since $b_2 \neq 0$ there is a unique intersection between $h$ and $r$ with $x^1 = (b_1^2b_3^2-b_1^2b_2^2-b_2^2b_3^2)/2b_1b_2^2b_3$ and $y^1 = (b_2^2b_3^2-b_1^2b_2^2-b_1^2b_3^2)/2b_1^2b_2b_3$. 

Summing up, in every case we have a uniquely determined solution that may be written as 
\begin{gather*}
x^1 = (b_1^2b_3^2-b_1^2b_2^2-b_2^2b_3^2)/2b_1b_2^2b_3, \quad  y^1 = (b_2^2b_3^2-b_1^2b_2^2-b_1^2b_3^2)/2b_1^2b_2b_3, \\
x^2=-(b_1/b_2)y^2, \quad x^3 = -(b_1/b_2)y^3.
\end{gather*}
Hereafter we list the possible critical values of the multi-moment map:
\begin{enumerate}
\item If at least one $b_i$ vanishes, then 
$$\nu_{\mathbb{S}^3 \times \mathbb{S}^3}(p,q) = \tfrac{2}{3\sqrt{3}}\bigl(y_1 b_1 +x_1 b_2 \pm b_3\bigr),$$
where $x_1b_2= \pm b_2$, $y _1b_1= \pm b_1$.

\item If $b_1,b_2,b_3 \neq 0$, first observe that
\begin{align*}
b_1y^1+b_2x^1 & = \frac{b_2^2b_3^2-b_1^2b_2^2-b_1^2b_3^2}{2b_1b_2b_3}+\frac{b_1^2b_3^2-b_1^2b_2^2-b_2^2b_3^2}{2b_1b_2b_3} = -\frac{b_1b_2}{b_3}.
\end{align*}
It is convenient to write $\langle x,y\rangle$ as $\cos \vartheta$, where $\vartheta$ is the angle between the vectors $x$ and $y$. The system giving critical points and the conditions $x^2=-(b_1/b_2)y^2, x^3 = -(b_1/b_2)y^3$ are saying that $1-\cos^2 \vartheta =  \sin^2\vartheta =  \lVert x \times y \rVert^2 = (b_1^2/b_3^2)\bigl(1-(y^1)^2\bigr)$, namely
\begin{equation*}
\cos^2 \vartheta = 1-\frac{b_1^2}{b_3^2}\left(1-\frac{(b_2^2b_3^2-b_1^2b_2^2-b_1^2b_3^2)^2}{4b_1^4b_2^2b_3^2} \right) = \left(\frac{b_2^2b_3^2-b_1^2b_2^2+b_1^2b_3^2}{2b_1b_2b_3^2}\right)^2.
\end{equation*}
Consequently $\langle x,y \rangle = \cos \vartheta =  \pm (b_2^2b_3^2-b_1^2b_2^2+b_1^2b_3^2)/2b_1b_2b_3^2$, the signs $\pm$ giving two stationary orbits, and
\begin{equation*}
\nu_{\mathbb{S}^3 \times \mathbb{S}^3}(p,q) = \frac{2}{3\sqrt{3}}\left(-\frac{b_1b_2}{b_3}\pm \frac{b_2^2b_3^2-b_1^2b_2^2+b_1^2b_3^2}{2b_1b_2b_3}\right).
\end{equation*}
\end{enumerate}

\begin{example}
\label{example_saddle}
Consider the case $b_3=0$ with $b_1,b_2 \neq 0$. We write $x = \varepsilon_1 i, y = \varepsilon_2 i$, with $\varepsilon_k \in \{ \pm 1\}$. If e.g.\ $a_1 = (2,3,1)$ and $a_2 = (2,3,5)$, then $b=(12,-8,0)$ and $12\varepsilon_2 -8\varepsilon_1 \in \{-20,-4,4,20 \}$, so we have four different non-zero critical values. The points corresponding to the value $4$ are obtained when $x=(1,0,0)=y$ and are actually saddle points. To check this recall that the $T^2$-symmetry allows one to evaluate the multi-moment map on points in $\mathbb{S}^2 \times \mathbb{S}^2$ rather than in $\mathbb{S}^3 \times \mathbb{S}^3$. So considering particular points around $(x,y) = ((1,0,0),(1,0,0))$ we can prove our claim. For example, for $\alpha \neq 0$ and small 
\begin{align*}
\nu_{\mathbb{S}^3 \times \mathbb{S}^3}((\cos\alpha, \sin \alpha, 0),(1,0,0)) & > \nu_{\mathbb{S}^3 \times \mathbb{S}^3}((1,0,0),(1,0,0)), \\
\nu_{\mathbb{S}^3 \times \mathbb{S}^3}((1,0,0),(\cos\alpha, \sin \alpha, 0)) & < \nu_{\mathbb{S}^3 \times \mathbb{S}^3}((1,0,0),(1,0,0)),
\end{align*}
hence $((1,0,0),(0,0,1))$ in $\mathbb{S}^2 \times \mathbb{S}^2$ corresponds to an orbit of saddle points in $\mathbb{S}^3 \times \mathbb{S}^3$. Analogous steps can be repeated for the value $-4$.
\end{example}
\begin{example}
The triple $b=(1,1,2)$ can be obtained using $a_1 = (1,-1,0)$ and $a_2=(1,1,-1)$, so the multi-moment map takes all values between  $-3/2\sqrt{3}$ and $5/6\sqrt{3}$. This shows that maximum and minimum may occur without being symmetric with respect to the origin. 
\end{example}
\begin{example}
The triple $b = (0,0,1)$ occurs for example when $a_1=(1,0,0), a_2=(0,1,0)$, and the extreme values of the multi-moment map are $\pm 2/3\sqrt{3}$. The connected components of the corresponding critical sets in $\mathbb{S}^3 \times \mathbb{S}^3$ are four-dimensional.
\end{example}
Finally, let us construct the graphs. Consider $(t_1,t_2,t_3) \in T^3$, with $t_k = e^{i\vartheta_k}$ for some $\vartheta_k \in \mathbb{R}$ and map each of them into $\mathrm{SU}(2)$ so that $t_i \mapsto \diag(t_i, t_i{}\negthinspace^{-1}) \in \mathrm{SU}(2)$.
The action is then as in \eqref{three_torus_action}. If $(g_1,g_2,g_3)\mathrm{SU}(2)_{\Delta}$ is fixed by $(t_1,t_2,t_3)$ then we have $(t_1g_1,t_2g_2, t_3g_3) = (g_1g,g_2g,g_3g)$  for some $g \in \mathrm{SU}(2)$. Isolating $g$ on one side we find $g_1\negthinspace{}^{-1}t_1g_1 = g_2{}\negthinspace^{-1}t_2g_2$ and $g_1{}\negthinspace^{-1}t_1g_1 = g_3{}\negthinspace^{-1}t_3g_3$, so
\begin{equation*}
t_1 = (g_1g_2{}\negthinspace^{-1})t_2(g_1g_2{}\negthinspace^{-1})^{-1}, \qquad t_1 = (g_1g_3{}\negthinspace^{-1})t_3(g_1g_3{}\negthinspace^{-1})^{-1}.
\end{equation*}
This shows that $t_1$ and $t_2$ are conjugate, as well as $t_1$ and $t_3$. Thus each pair has to have the same eigenvalues. Since $t_1,t_2,t_3$ are diagonal matrices we see that if $t_1 = \diag(e^{i \vartheta},e^{-i\vartheta})$ then $t_k = \diag(e^{i\vartheta},e^{-i\vartheta})$ or $t_k = \diag(e^{-i\vartheta},e^{i\vartheta})$ for $k = 2,3$. This leads us to consider four cases: write $t_1 = t$, then $(t_1,t_2,t_3)$ can be written as either $(t,t,t)$, $(t,t^{-1},t)$, $(t,t,t^{-1})$, or $(t,t^{-1},t^{-1})$. Note there is a discrete stabiliser given by $t=\pm \id$, meaning that the action is not effective. By the usual argument as in the two cases above we can thus ignore it.

In the first case we get $g_2g_1{}\negthinspace^{-1} = \diag(\lambda,\conjugate{\lambda})$ with $\lvert \lambda \rvert = 1$, as $g_2g_1{}\negthinspace^{-1} \in \mathrm{SU}(2)$ commutes with $t$. The same holds for $g_3g_1{}\negthinspace^{-1}$, so we can write $g_3g_1{}\negthinspace^{-1} = \diag(\lambda',\conjugate{\lambda}')$, with $\lvert \lambda' \rvert = 1$. Hence 
\begin{align*}
(g_1,g_2,g_3)\mathrm{SU}(2)_{\Delta} & = (\id, g_2g_1{}\negthinspace^{-1},g_3g_1{}\negthinspace^{-1})\mathrm{SU}(2)_{\Delta} \\
& = (\id, \diag(\lambda, \conjugate{\lambda}), \diag(\lambda',\conjugate{\lambda'}))\mathrm{SU}(2)_{\Delta},
\end{align*}
and the group of elements of the form $(g_1,g_2,g_3)\mathrm{SU}(2)_{\Delta}$ is then isomorphic to $\mathbb{S}^1 \times \mathbb{S}^1 = T^2$. This shows we have a two-torus whose points are fixed by $\mathbb{S}^1$. The other cases are similar. The image of these critical sets in the orbit space $(\mathbb{S}^3 \times \mathbb{S}^3)/T^3$ is given by four points (cf.\ Figure \ref{fig:S3xS3}, {\textbf D}). The saddle points found in Example \ref{example_saddle} lie in these four special $T^3$-orbits.

For every $T^2$ in $T^3$, the stabilizers are still zero- or one-dimensional as $\text{Stab}_{T^2}(p) \subset \text{Stab}_{T^3}(p)$. Thus there are no vertices in our graph, we get only disjoint circles. Further, a $T^2 \subset T^3$ cannot contain all the circles $(t,t,t),(t,t^{-1},t),(t,t,t^{-1}),(t,t^{-1},t^{-1})$. There are three cases: the two-torus may contain none, one or two of the circles above. For example, the first case happens when $T^2$ is of the form $(r,s,1)$, $r,s \in \mathbb{S}^1$, so we get an empty graph and the $T^2$-action is free (cf.\ Figure~\ref{fig:S3xS3} \textbf{A}, and compare with Example \ref{ex_torus_rsi}). If $T^2$ contains triples $(r,rs,rs^2)$, $r,s \in \mathbb{S}^1$, then it contains the circle $(r,r,r)$, so the graph is a single circle. Thirdly, if $T^2$ is of the form $(r,s,r)$, then it contains the circles $(t,t,t)$ and $(t,t^{-1},t)$, but does not include $(t,t,t^{-1})$ and $(t,t^{-1},t^{-1})$, so we get two circles in our graph (cf.\ Figure~\ref{fig:S3xS3} \textbf{B}, \textbf{C}, compare the latter with Example \ref{ex_torus_rsr}).

\begin{figure}
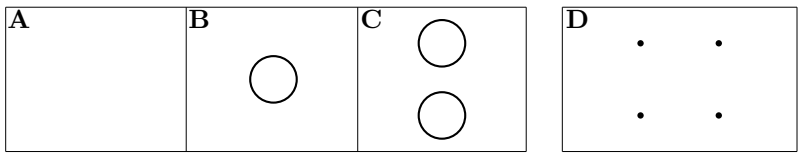

\centering
  \tikzpicture
  [scale=0.8]
  \draw[very thin][black] (-11.25, 1.8) -- (-11.25, -0.6);
  \draw[very thin][black] (-11.25,1.8) -- (-2.6,1.8);
  \draw[very thin][black] (-11.25,-0.6) -- (-2.6,-0.6);
   \draw[very thin][black] (-8.25,-0.6) -- (-8.25,1.8); 
   \draw[very thin][black] (-2,1.8) -- (1.9,1.8);
   \draw[very thin][black] (-2,-0.6) -- (1.9,-0.6);
   \draw (-11.03,1.6) node {\textbf A};
   \draw (-8.03,1.6) node {\textbf B};
   \draw[thick][black] (-6.8,0.6) circle [radius=11pt]; 
   \draw[very thin][black]  (-5.4,-0.6) -- (-5.4,1.8);
   \draw (-5.17,1.6) node {\textbf C};
   \draw[thick][black] (-4,0) circle [radius=11pt];
   \draw[thick][black] (-4,1.2) circle [radius=11pt];
   \draw[very thin][black]  (-2.6,-0.6) -- (-2.6,1.8);
   \draw[very thin][black] (-2,-0.6) -- (-2,1.8);
   \draw (-1.75,1.6) node {\textbf D};
   \draw[very thin][black]  (1.9,-0.6) -- (1.9,1.8);
 \coordinate (A) at (0.6,0);
    \coordinate (B) at (-0.7,0);
    \coordinate (C) at (-0.7,1.2);
    \coordinate (D) at (0.6,1.2);
     \fill (A) circle [radius=1.5pt];
    \fill (B) circle [radius=1.5pt];
    \fill (C) circle [radius=1.5pt];
    \fill (D) circle [radius=1.5pt];
     \endtikzpicture
  \caption{In \textbf{A} the empty graph corresponding to the free $T^2$-action. In \textbf{B} and \textbf{C} the sets of special orbits of the $T^2$-action when this is not free. In \textbf{D} the four points corresponding to the sets of special orbits of the $T^3$-action.}
  \label{fig:S3xS3}
\end{figure}

\providecommand{\bysame}{\leavevmode\hbox to3em{\hrulefill}\thinspace}
\providecommand{\MR}{\relax\ifhmode\unskip\space\fi MR }
\providecommand{\MRhref}[2]{%
  \href{http://www.ams.org/mathscinet-getitem?mr=#1}{#2}
}
\providecommand{\href}[2]{#2}

\small (G. Russo) Department of Mathematics, Geometry and Topology Group, Aarhus University, Ny Munkegade 118, Bldg 1530, DK-8000 Aarhus C, Denmark 

\textit{E\/-mail address}: {\tt giovanni.russo@math.au.dk}\\

\end{document}